\theoremstyle{plain}
\newtheorem{theorem}{Theorem}
\newtheorem{definition}[theorem]{Definition}
\newtheorem{lemma}[theorem]{Lemma}
\newtheorem{proposition}[theorem]{Proposition}
\newtheorem{example}[theorem]{Example}
\newtheorem{remark}[theorem]{Remark}
\newcommand\ol{\overline}
\newcommand\Aut{\mathrm{Aut}}
\newcommand \sA{\mathcal{A}}
\newcommand\sH{{\mathcal H}}
\newcommand\EE{{\mathbb E}}
\newcommand\RR{{\mathbb R}}
\newcommand\ZZ{{\mathbb Z}}
\newcommand\NN{{\mathbb N}}
\newcommand\QQ{{\mathbb Q}}
\newcommand\PP{{\mathbb P}}
\renewcommand\a{\alpha}
\newcommand\om{\omega}
\newcommand\g{\gamma}
\newcommand\be{\beta}
\newcommand\si{\sigma}
\newcommand\eps{\epsilon}
\newcommand\De{\Delta}
\newcommand\qq{\qquad}
\newcommand\q{\quad}
\newcommand\resp{respectively}
\newcommand\rank{\mathrm{rank}}
\newcommand\oo{\infty}
\newcommand\sG{{\mathcal G}}
\newcommand\sN{{\mathcal N}}
\newcommand\sS{{\mathcal S}}
\newcommand\sW{{\mathcal W}}
\newcommand\Ga{\Gamma}
\newcommand\Si{\Sigma}
\newcommand\de{\delta}
\newcommand\id{{\bf 1}}
\newcommand\dih{\mathrm{Dih}}
\newcommand\bzero{{\bf 0}}
\newcommand\Eq{{\mathrm{Eq}}}
\renewcommand\ell{l}
\newcommand\SL{\text{\rm SL}}
\newcommand\vG{\vec G}
\newcommand\vE{\vec E}
\newcommand\pd{\partial}
\newcommand\ghf{graph height function}
\newcommand\ughf{unimodular graph height function}
\newcommand\GHF{group height function}
\newcommand\Stab{\mathrm{Stab}}
\newcommand\hdi{$\sH$-difference-invariant}
\newcommand\sap{point of increase}
\newcommand\saps{points of increase}
\renewcommand\o{{\mathrm o}}
\newcommand\normal{\trianglelefteq}
\newcommand\normalgr{\trianglerighteq}
\renewcommand\O{\mathrm{O}}
\newcounter{mycount}
\newenvironment{romlist}{\begin{list}{\rm(\roman{mycount})}%
   {\usecounter{mycount}\labelwidth=1cm\itemsep 0pt}}{\end{list}}
\newenvironment{numlist}{\begin{list}{\arabic{mycount}.}%
   {\usecounter{mycount}\labelwidth=1cm\itemsep 0pt}}{\end{list}}
\newenvironment{letlist}{\begin{list}{\rm(\alph{mycount})}%
   {\usecounter{mycount}\labelwidth=1cm\itemsep 0pt}}{\end{list}}
\newenvironment{Alist}{\begin{list}{\MakeUppercase{\alph{mycount}}.}%
   {\usecounter{mycount}\labelwidth=1cm\itemsep 0pt}}{\end{list}}
\numberwithin{equation}{section}
\numberwithin{theorem}{section}
\numberwithin{figure}{section}
\title[Connective constants and height functions]{Connective constants and height functions\\ for Cayley graphs}
\author{Geoffrey R.\ Grimmett}
\address{Statistical Laboratory, Centre for
Mathematical Sciences, Cambridge University, Wilberforce Road,
Cambridge CB3 0WB, UK}
\email{g.r.grimmett@statslab.cam.ac.uk}
\urladdr{\url{http://www.statslab.cam.ac.uk/~grg/}}
\author{Zhongyang Li} 
\address{Department of Mathematics,
University of Connecticut,
Storrs, Connecticut 06269-3009, USA} 
\email{zhongyang.li@uconn.edu}
\urladdr{\url{http://www.math.uconn.edu/~zhongyang/}}
\begin{document}

\begin{abstract}
The connective constant $\mu(G)$ of an infinite transitive graph $G$
is the exponential growth rate of the number of self-avoiding walks 
from a given origin. In earlier work of Grimmett and Li, a locality theorem
was proved for connective constants, namely, that the connective constants
of two graphs are close in value whenever the graphs agree on a large ball around the origin. 
A condition of the theorem was that the graphs support so-called \lq unimodular graph height functions\rq.
When the graphs are Cayley graphs of infinite, finitely generated groups, there is a special
type of \ughf\ termed here a \lq\emph{group} height function\rq. 
A necessary and sufficient condition for the existence of a \GHF\ is presented,
and may be applied in the context of the bridge constant, and of the locality of connective constants for Cayley graphs.
Locality may thereby be established for a variety of infinite groups
including those with strictly positive deficiency.

It is proved that a large class of Cayley graphs
support  \ughf s, that are in addition
\emph{harmonic} on the graph.  
This implies, for example, the existence of \ughf s for the Cayley graphs of finitely generated
solvable groups. It turns out that graphs with non-unimodular automorphism
subgroups also possess
\ghf s, but the resulting \ghf s need not be  harmonic.

Group height functions, as well as the graph height functions of the previous
paragraph, are non-constant harmonic functions with linear growth and an additional
property of having periodic differences. 
The existence of such functions on Cayley graphs is a topic of interest beyond
their applications in the theory of self-avoiding walks.
\end{abstract}

\date{27 January 2015, revised 22 August 2016}

\keywords{Self-avoiding walk, connective constant, 
vertex-transitive graph, quasi-transitive graph, bridge decomposition, Cayley graph, 
Higman group, graph height function, group height function, 
indicability, harmonic function, solvable group, 
unimodularity}
\subjclass[2010]{05C30, 20F65, 60K35, 82B20}
\maketitle

\section{Introduction, and summary of results}

The main purpose of this article
is to study aspects of `locality' for the connective constants of
Cayley graphs of finitely presented groups. The locality
question may be posed as follows: if
two Cayley graphs are locally isomorphic in the
sense that they agree on a large ball centred at the identity,
then are their connective constants close in value? 
The current work may be viewed as a continuation of
the study of locality for connective constants of quasi-transitive graphs
reported in \cite{GL-loc}.
The locality of critical points is a well developed topic
in the theory of disordered systems, and the reader is referred,
for example,  to
\cite{bnp,mt,psn} for related work about percolation on Cayley graphs.
 
The self-avoiding walk (SAW) problem was introduced to
mathematicians in 1954 by Hammersley and Morton \cite{hm}.
Let $G$ be an infinite, connected, transitive graph.
The number of $n$-step SAWs on $G$
from a given origin grows in the manner of
$\mu^{n(1+\o(1))}$ for some growth rate $\mu=\mu(G)$ called the 
\emph{connective constant} of the graph $G$. 
The value of $\mu(G)$ is not generally known,
and a substantial part of
the literature on SAWs is targeted at 
properties of connective constants. The
current paper may be viewed in this light, as a continuation of the
series of papers   \cite{GrLrev, GrL2, GrL1,GL-loc, GrL3}.

The principal result of \cite{GL-loc} is as follows. 
Let $G$, $G'$ be infinite, transitive graphs,
and write $S_K(v,G)$ for the $K$-ball around the vertex $v$ in $G$.
If $S_K(v,G)$ and $S_K(v',G')$
are isomorphic as rooted graphs, then 
\begin{equation}\label{eq:loc1}
|\mu(G)-\mu(G')| \le \eps_K(G),
\end{equation}
where $\eps_K(G)\to 0$ as $K \to\oo$.  
This is proved subject to a 
condition on  $G$ and  $G'$, namely
that they support so-called \lq \ughf s'.

Cayley graphs of finitely generated groups  provide a category
of transitive graphs of special interest. 
They possess an algebraic structure in addition to
their graphical structure, and this algebraic structure provides
a mechanism for the study of their graph height functions. 
A necessary and sufficient condition is
given in Theorem \ref{thm3} for the existence of a so-called
\lq \GHF', and it is pointed out that a \GHF\ is a \ughf, but not \emph{vice versa}.
The class of Cayley groups that possess  \GHF s  
includes all infinite, finitely generated, free solvable groups and free nilpotent groups,
and groups with fewer relators than generators.

We turn briefly to the topic of harmonic functions. 
The study of the existence and structure of non-constant harmonic functions on Cayley graphs has
acquired prominence in geometric group theory through the work of 
Kleiner  and others, see \cite{kle10,st}.  The group height functions
of Section \ref{sec:Cay} are harmonic with linear growth.
Thus, one aspect of the work reported in this paper is the construction, 
on certain classes of Cayley graphs, of 
linear-growth harmonic functions with the additional property of
having differences that are invariant under the action of a subgroup
of automorphisms. Such harmonic functions do not appear to contribute
to the discussion of the Liouville property (see, for example, \cite[Defn 2.1.10] {Kum}),
since both their positive and negative parts are unbounded.  
For recent articles on the related aspect of geometric group theory,
the reader is referred to \cite{MY14,T14}. 

This paper is organized as follows.
Graphs, self-avoiding walks, and Cayley graphs are introduced in Section \ref{sec:not}.
Graph height functions and the locality theorem of \cite{GL-loc} are reviewed
in Section \ref{sec:sec2}, and a principal tool is presented at Theorem \ref{prop:indep}. 
Graphs with non-unimodular automorphism subgroups may be handled by similar means
(see Theorem \ref{cor:nonunim}), but the resulting \ghf s need not
be harmonic. 

Group height functions are the subject of Section \ref{sec:Cay}, and a necessary and
sufficient condition is presented in Theorem \ref{thm3} for the existence of a \GHF.
Section \ref{sce} is devoted to existence conditions for height functions, 
leading to existence theorems for virtually solvable groups.
In Section \ref{sec:conv} is presented a theorem
for the convergence of connective constants subject to the addition of further relators.
This parallels the Grimmett--Marstrand theorem \cite{gm} for the critical percolation probabilities of 
slabs of $\ZZ^d$ (see also \cite[Thm 5.2]{GrL3}).
Sections \ref{sec:pf-}--\ref{sec:nonunim} contain the proofs of Theorems 
\ref{prop:indep} and \ref{cor:nonunim}, \resp.

\section{Graphs, self-avoiding walks, and groups}\label{sec:not}

The graphs $G=(V,E)$ considered here are
infinite, connected, and usually simple.
An undirected edge $e$ with endpoints $u$, $v$ is 
written as $e=\langle u,v \rangle$, and if directed from $u$
to $v$ as $[u,v\rangle$.
If $\langle u,v \rangle \in E$, we call $u$ and $v$ \emph{adjacent}
and write $u \sim v$. The set of neighbours of $v \in V$
is denoted $\pd v:=\{u: u \sim v\}$. 

The \emph{degree} $\deg(v)$ of vertex $v$ is the number of edges
incident to $v$, and $G$ is called \emph{locally finite} if
every vertex-degree is finite.  
The \emph{graph-distance} between two vertices $u$, $v$ is the number of edges
in the shortest path from $u$ to $v$, denoted $d_G(u,v)$.

The automorphism group of the graph $G=(V,E)$ is
denoted $\Aut(G)$. A subgroup $\Ga \le \Aut(G)$ is said to 
\emph{act transitively} on $G$
if, for $v,w\in V$, there exists $\g \in \Ga$ with $\g v=w$.
It is said to  \emph{act quasi-transitively} if there is a finite
set $W$ of vertices 
such that, for $v \in V$, there exist
$w \in W$ and $\g \in \Ga$ with $\g v =w$.
The graph is called (\emph{vertex}-)\emph{transitive} 
(\resp, \emph{quasi-transitive}) if $\Aut(G)$ acts transitively
(\resp, quasi-transitively). 
For $\Ga \le \Aut(G)$ and a vertex $v \in V$, the orbit of $v$ under $\Ga$ is written 
$\Ga v$. 

An (\emph{$n$-step}) \emph{walk} $w$ on $G$ is
an alternating sequence $(w_0,e_0,w_1,e_1,\dots, e_{n-1}, w_n)$,
where $n \ge 0$,  of vertices $w_i$
and edges $e_i=\langle w_i, w_{i+1}\rangle$, and its \emph{length} $|w|$  is the number of its edges.
The walk $w$ is called \emph{closed} if $w_0=w_n$. 
A \emph{cycle} is a closed walk $w$ satisfying $n \ge 3$ and 
$w_i\ne w_j$ for $1 \le i < j \le n$. 

An (\emph{$n$-step}) \emph{self-avoiding walk} (SAW) 
on $G$ is  a walk containing $n$ edges
no vertex of which appears more than once.
Let $\Si_n(v)$ be the set of $n$-step SAWs starting at $v$, with
cardinality $\si_n(v):=|\Si_n(v)|$.
\emph{Assume that $G$ is transitive},
and select a vertex of $G$ which we call the \emph{identity} 
or \emph{origin}, denoted $\id=\id_G$, and let
$\si_n=\si_n(\id)$.
It is standard (see \cite{hm,ms}) that
\begin{equation}\label{eq:sisub}
\si_{m+n} \le \si_m\si_n,
\end{equation}
whence, by the subadditive limit theorem,  the \emph{connective constant}
$$
\mu=\mu(G) :=\lim_{n\to\oo} \si_n^{1/n}
$$
exists. See \cite{bdgs,ms} for recent accounts of the theory of SAWs. 

We turn now to finitely generated groups and their Cayley graphs.
Let $\Ga$ be a group with
generator set $S$ satisfying $|S|<\oo$ 
and $\id\notin S$, where
$\id=\id_\Ga$ is the identity element. 
We write $\Ga =\langle S \mid R\rangle$ with $R$ a set of relators,
and we adopt the following convention for the inverses of generators.
For the sake of concreteness, we consider $S$ as a set of symbols, 
and any information concerning inverses is encoded in the relator set; 
it will always be the case that, using this information, we may identify the inverse
of $s \in S$ as another generator $s'\in S$. For example,
the free abelian group of rank $2$ has presentation $\langle x,y,X,Y\mid xX, yY, xyXY\rangle$,
and the infinite dihedral group $\langle s_1,s_2\mid s_1^2,s_2^2\rangle$. 
Such a group is called \emph{finitely generated} (in that $|S|<\oo$), and \emph{finitely presented}
if, in addition,  $|R|<\oo$.

The \emph{Cayley graph} of $\Ga=\langle S \mid R\rangle$ is the simple graph $G=G(S,R)$
with vertex-set $\Ga$, and an (undirected) edge $\langle \g_1,\g_2\rangle$ if and only
if $\g_2=\g_1s$ for some $s\in S$.
Further properties of Cayley graphs are presented as needed in Section \ref{sec:Cay}.
See \cite{bab95} for an account of Cayley graphs, and \cite{LyP} for a short account.
The books \cite{dlH} and \cite{LedW,Rob} are devoted 
to geometric group theory, and general group theory,
\resp.

The set of integers is written $\ZZ$, the natural numbers as
$\NN$, and the rationals as $\QQ$.

\section{Graph height functions}\label{sec:sec2}

We recall from \cite{GL-loc} the definition
of a \ghf\ for a transitive graph, and then we review the locality theorem (the
proof of which may be found in \cite{GL-loc}).
This is followed by Theorem \ref{prop:indep}, which is
one of the main tools of this work.

Let $\sG$ be the set of all infinite, connected, transitive, locally finite, simple graphs,
and let $G=(V,E)\in\sG$. 
Let $\sH$ be a subgroup of $\Aut(G)$.
A function $F:V \to \RR$ is said to be \emph{\hdi} if 
\begin{equation}\label{eq:hdi2}
F(v)-F(w)=F(\g v)-F(\g w), \qq v,w\in V,\ \g\in \sH.
\end{equation}

\begin{definition} \label{def:height}

A \emph{graph height function} on $G$ is a pair $(h,\sH)$, where 
$\sH\le \Aut(G)$ acts quasi-transitively on $G$ and  $h:V \to \ZZ$, such that
\begin{letlist}
\item $h(\id)=0$,
\item $h$ is \hdi,
\item for  $v\in V$,
there exist $u,w \in \pd v$ such that
$h(u) < h(v) < h(w)$.
\end{letlist}
The \ghf\ $(h,\sH)$ is called \emph{unimodular} if 
$\sH$ is unimodular.
\end{definition}

We remind the reader of the definition of the unimodularity
of a subgroup $\sH \le \Aut(G)$.
The ($\sH$-)\emph{stabilizer} 
 $\Stab_v$ ($=\Stab_v^\sH$) of a vertex $v$ is the set of all $\g\in \sH$ for which $\g v = v$.
As shown in \cite{Trof} (see also \cite{BLPS,LyP,SoW}), when viewed as a topological group
with the  topology of pointwise convergence, $\sH$
is unimodular  if and only if 
\begin{equation}\label{g804}
|\Stab_u v| = |\Stab_v u|, \qquad v\in V,\ u \in \sH v.
\end{equation} 
We follow
\cite[Chap.\ 8]{LyP} by \emph{defining} $\sH$ to be \emph{unimodular} (on $G$)  if
\eqref{g804} holds. 

We sometimes omit the reference to $\sH$ and refer to such $h$ as a
\ghf. In Section \ref{sec:Cay} is defined the related concept of a \emph{\GHF}
for the Cayley graph of a finitely presented group. We shall see that every \GHF\ is
a graph height function, but not \emph{vice versa}.

\begin{remark}[Linear growth]
A \ghf\ $(h,\sH)$ on $G$ has linear growth in that
$h(\gamma^n \id) = nh(\g \id)$ for $\g\in\sH$.
\end{remark}

Associated with the \ghf\ $(h,\sH)$ are two integers $d$, $r$  given as follows.
We set
\begin{equation}\label{eq:defd}
d=d(h)=\max\bigl\{|h(u)-h(v)|: u,v\in V,\ u \sim v\bigr\}.
\end{equation}
If $\sH$ acts transitively, we set
$r=0$. Assume $\sH$ does not act transitively, and 
let $r=r(h,\sH)$ be the least integer $r$ such that the following holds.
For $u,v \in V$ in different orbits of $\sH$, there
exists $v' \in \sH v$ 
such that $h(u)<h(v')$, and a SAW $\nu(u,v')$ from $u$
to $v'$, with length $r$ or less, all of whose vertices $x$, other than
its endvertices,  satisfy $h(u)<h(x)< h(v')$. We recall \cite[Prop.\ 3.2]{GL-loc}
where it is proved, \emph{inter alia}, that $r<\oo$.

We state next the locality theorem for transitive graphs.
The \emph{ball} $S_k=S_k(G)$, with centre $\id=\id_G$ and radius $k$, is
the subgraph of $G$ induced by the set of its vertices within
graph-distance $k$ of $\id$. For $G,G'\in \sG$, we write
$S_k(G) \simeq S_k(G')$ if there exists a  graph-isomorphism from $S_k(G)$ to
$S_k(G')$ that maps $\id_G$ to $\id_{G'}$, and we let
$$
K(G,G') = \max\bigl\{k: S_k(G) \simeq S_k(G')\bigr\}, \qq G,G' \in \sG.
$$
For $D\ge 1$ and $R \ge 0$, let $\sG_{D,R}$ be the set of
all $G\in\sG$ which possess a \ughf\  
$h$ satisfying $d(h)\le D$ and $r(h,\sH)\le R$. 

For $G \in \sG$ with a given \ughf\ $(h,\sH)$, there is a subset of SAWs called \emph{bridges}
which are useful in the study of the geometry of SAWs on $G$. 
The SAW $\pi=(\pi_0,\pi_1,\dots,\pi_n)\in \Si_n(v)$ is called a \emph{bridge} if 
\begin{equation}\label{eq:bridge}
h(\pi_0)<h(\pi_i) \leq h(\pi_n), \qq 1\leq i\leq n,
\end{equation}
and the total number of such bridges is denoted $b_n(v)$.
It is easily seen (as in \cite{HW62})  that $b_n:=b_n(\id)$ satisfies
\begin{equation}\label{eq:b-subadditive}
b_{m+n} \ge b_m b_n,
\end{equation}
from which we deduce the existence of the \emph{bridge constant}
\begin{equation}\label{eq:bexists}
\be  = \be(G) = \lim_{n\to\oo} b_n^{1/n}.
\end{equation}
The definition of $\be$ depends on the choice of height function,
but it turns out that, under reasonable conditions, its value does not.

\begin{theorem}[Bridges and locality for transitive graphs, \cite{GL-loc}]\label{thm2}
\mbox{}
\begin{letlist}
\item
If $G \in \sG$ supports a \ughf\ $(h,\sH)$, then
$\be(G)= \mu(G)$.
\item
Let $D \ge 1$, $R \ge 0$,
and let $G \in \sG$ and $G_m \in \sG_{D,R}$ for $m \ge 1$ be such that
$K(G,G_m)  \to \oo$ as $m \to\oo$.
Then $\mu(G_m) \to \mu(G)$.
\end{letlist}
\end{theorem}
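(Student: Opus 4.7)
For part (a), the inequality $\be(G) \le \mu(G)$ is immediate from $b_n \le \si_n$, so the substance lies in the reverse direction. The plan is to adapt the classical Hammersley--Welsh bridge decomposition: given an $n$-step SAW $\pi$ from $\id$, cut $\pi$ at its successive height-records by setting $a_0=0$ and inductively taking $a_{i+1}$ to be the last index in $[a_i,n]$ at which $h\circ\pi$ attains its maximum (or minimum, alternately) on $[a_i,n]$. Each piece $\pi|_{[a_i,a_{i+1}]}$ is then a bridge (or a reversed bridge), the heights at the cut vertices form two interleaved strictly monotone sequences lying in $[-Dn,Dn]$, and the number of admissible length-lists $n_i := a_{i+1}-a_i$ summing to $n$ is bounded by $\exp(C\sqrt n)$ via the standard integer-partition estimate, with $C$ depending only on $d(h)$. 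Counting SAWs with prescribed skeleton factors as $\prod_i b_{n_i}(v^{(i)})$, which by the supermultiplicativity \eqref{eq:b-subadditive} telescopes to $b_n$, giving $\si_n \le \exp(C\sqrt n)\, b_n$ and hence $\mu(G) \le \be(G)$ on taking $n$th roots.

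The main obstacle in (a) is that the start $v^{(i)}$ of the $i$th piece differs from $\id$, so $b_{n_i}(v^{(i)})\ne b_{n_i}(\id)$ a priori. Using that $h$ is \hdi\ and that $\sH$ acts quasi-transitively, one can translate each $v^{(i)}$ by some $\g\in \sH$ into a fixed finite orbit-representative set while preserving bridge-ness. To compare averaged bridge counts at orbit representatives with $b_n(\id)$ up to a bounded factor, one runs a mass-transport argument on each orbit; unimodularity \eqref{g804} is exactly the condition that balances incoming and outgoing mass and makes these counts comparable. This is the point at which the hypothesis that $(h,\sH)$ is a \emph{unimodular} graph height function (rather than merely a \ghf) enters essentially.

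For part (b), combine (a) with Fekete's lemma applied to \eqref{eq:b-subadditive}: $\mu(G)=\be(G)=\sup_n b_n(G)^{1/n}=\lim_n b_n(G)^{1/n}$, and likewise for each $G_m$. The structural observation driving locality is that any $n$-step bridge with respect to a height function $h$ satisfying $d(h)\le D$ is confined to the graph-ball of radius $n$ about its start, so $b_n$ depends only on $S_n$ together with the restriction of $h$ to $S_n$. A quantitative rerun of the argument in (a) produces the uniform Hammersley--Welsh bound $\si_n \le \exp(C\sqrt n)\, b_n$ with $C=C(D,R)$ valid throughout $\sG_{D,R}$. Given $\eps>0$, choose $n$ so that $b_n(G)^{1/n}>\mu(G)-\eps$ and $\exp(C/\sqrt n)<1+\eps$; for all $m$ with $K(G,G_m)\ge n$ we have $b_n(G_m)=b_n(G)$, and hence
\begin{equation*}
\mu(G)-\eps < b_n(G_m)^{1/n} \le \mu(G_m) \le \si_n(G_m)^{1/n} \le \exp(C/\sqrt n)\, b_n(G_m)^{1/n} \le (1+\eps)\mu(G).
\end{equation*}
Letting $m\to\oo$ and then $\eps\to 0$ yields $\mu(G_m)\to\mu(G)$. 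The principal difficulty here is to verify that the Hammersley--Welsh constant depends only on $D$ and $R$ and not on the particular graph; the parameter $R=r(h,\sH)$ is needed precisely in order to construct short bridges linking vertices in distinct $\sH$-orbits when $\sH$ fails to act transitively.
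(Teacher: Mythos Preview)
This theorem is not proved in the present paper: it is quoted from \cite{GL-loc}, where the full argument appears. There is therefore no proof here against which to compare your proposal directly.

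Your sketch for part~(a) is the Hammersley--Welsh strategy that \cite{GL-loc} uses, and you correctly identify unimodularity of $\sH$ as the ingredient that makes bridge counts at different orbit representatives comparable.

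Part~(b), however, has a genuine gap. You write ``choose $n$ so that $b_n(G)^{1/n}>\mu(G)-\eps$'' and then ``for all $m$ with $K(G,G_m)\ge n$ we have $b_n(G_m)=b_n(G)$''. But the hypothesis is only $G_m\in\sG_{D,R}$; the limit graph $G$ lies merely in $\sG$ and is \emph{not} assumed to carry any height function, so $b_n(G)$ is undefined. Even if one were supplied, the equality $b_n(G_m)=b_n(G)$ would require the height functions on $G_m$ and $G$ to agree on $S_n$, and nothing in the hypotheses forces the height functions on different $G_m$ to be compatible. Your final displayed chain also breaks at the last inequality, since $b_n(G_m)^{1/n}\le\mu(G_m)$ does not yield $b_n(G_m)^{1/n}\le\mu(G)$.

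The repair is to compare via $\si_n$, which genuinely depends only on the ball: $K(G,G_m)\ge n$ gives $\si_n(G_m)=\si_n(G)$. Then $\mu(G_m)\le\si_n(G_m)^{1/n}=\si_n(G)^{1/n}$ handles the upper bound, while the uniform estimate $\si_n(G_m)\le e^{C(D,R)\sqrt n}\,b_n(G_m)$ together with $\mu(G_m)=\be(G_m)\ge b_n(G_m)^{1/n}$ gives
\[
\mu(G_m)\ \ge\ e^{-C/\sqrt n}\,\si_n(G_m)^{1/n}\ =\ e^{-C/\sqrt n}\,\si_n(G)^{1/n},
\]
and letting $n\to\oo$ after $m\to\oo$ finishes the lower bound. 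This is the route taken in \cite{GL-loc}.
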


The main thrust of the current paper is to identify classes of finitely generated groups
whose Cayley graphs support \ghf s, and one of our main tools is 
the following theorem, of which the proof is given in Section \ref{sec:pf-}.

\begin{theorem}\label{prop:indep}
Let $G=(V,E)\in \sG$.
Suppose there exist
\begin{letlist}
\item  a subgroup $\Ga\le \Aut(G)$ acting
transitively on $V$, 
\item  a normal subgroup $\sH\normal\Ga$ satisfying
$[\Ga:\sH] < \oo$,  which is unimodular,
\item  a function $F:\sH\id \to \ZZ$ that is \hdi\ and non-constant.
\end{letlist}
Then,
\begin{romlist}
\item there exists a unique harmonic, \hdi\ function $\psi$ on $G$ that agrees with $F$ on $\sH\id$.
\item there exists  a harmonic, \hdi\ function $\psi'$ that increases 
everywhere, in that every $v \in V$ has
neighbours $u$, $w$ such that $\psi'(u)<\psi'(v)<\psi'(w)$, 
\item the function $\psi$ of part (i) takes rational values, and 
the $\psi'$ of part (ii) may be taken to be rational also; therefore, there exists a harmonic, 
\ughf\ of the form $(h, \sH)$.
\end{romlist}
\end{theorem}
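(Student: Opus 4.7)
Since $\sH \normal \Ga$ and $\Ga$ acts transitively on $V$, the subgroup $\sH$ acts quasi-transitively with at most $[\Ga:\sH] < \infty$ orbits, which I denote $\sH v_1, \dots, \sH v_M$ with $v_1 = \id$. An $\sH$-difference-invariant function $\psi$ on $V$ is equivalent to a group homomorphism $\phi : \sH \to \RR$ vanishing on vertex stabilizers together with a choice of constant $\psi(v_i)$ on each orbit, via $\psi(\g v_i) = \psi(v_i) + \phi(\g)$ for $\g \in \sH$. The given $F$ determines such a $\phi$ with values in $\ZZ$ via $\phi(\g) := F(\g \id) - F(\id)$, and forces $\psi(v_1) = F(\id)$. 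Imposing harmonicity at each $v_i$ (which, by $\sH$-difference-invariance, propagates automatically to the whole orbit $\sH v_i$) produces a finite Poisson system $L \mathbf{x} = \mathbf{b}$ on the ``orbit graph'' with vertex set $\{v_1,\dots,v_M\}$, where $L_{ij} = -n_{ij}$ for $i \ne j$ and $L_{ii} = \sum_{j \ne i} n_{ij}$, with $n_{ij}$ denoting the number of neighbours of $v_i$ lying in $\sH v_j$, and where $b_i$ is a linear expression in the values of $\phi$ along the edges incident to $v_i$.

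For part (i), note that $\ker L$ consists of constants (since the row sums vanish), while unimodular mass transport yields the relation $w_i L_{ij} = w_j L_{ji}$ with $w_i := |\Stab^{\sH}_{v_i}|^{-1}$, so $\mathbf{w}$ spans the left null-space of $L$. The solvability condition $\sum_i w_i b_i = 0$ then follows from a second mass-transport calculation applied to the $\sH$-diagonally-invariant antisymmetric function $f(v,u) := F_0(u) - F_0(v)$ when $u \sim v$ (and $0$ otherwise), where $F_0$ is any fixed $\sH$-difference-invariant extension of $F$, say with $F_0(v_i) := 0$ for $i \ge 2$; mass transport equates $\sum_i w_i b_i$ with its own negative, forcing it to vanish. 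Uniqueness is then immediate, since the one-dimensional constant-kernel of $L$ is pinned down by the constraint $\psi(v_1) = F(\id)$.

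For part (ii), I would exploit $\Ga$-translates: for each $\g \in \Ga$ the function $\psi^\g(v) := \psi(\g v)$ is again harmonic and $\sH$-difference-invariant (using $\sH \normal \Ga$, so that $\g h \g^{-1} \in \sH$ for $h \in \sH$). Call $v \in V$ \emph{bad} if $\psi$ is locally constant at $v$; by harmonicity, every non-bad vertex automatically satisfies the strict-increase property, and the non-constancy of $\psi$ together with connectedness of $G$ precludes every vertex being bad. For each bad orbit representative $v_i$, transitivity of $\Ga$ yields $\g_i \in \Ga$ with $\g_i v_i$ good for $\psi$, equivalently $v_i$ good for $\psi^{\g_i}$. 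The combination $\psi' := \psi + \sum_{i \in B} c_i \psi^{\g_i}$, summed over the set $B$ of originally-bad representatives, is harmonic and $\sH$-difference-invariant for any choice of rationals $c_i$; and for each $j$ the locus where $\psi'$ is bad at $v_j$ is a proper affine subspace of $\QQ^B$ (the $c_j$-coefficient of the equation $\psi'(u) = \psi'(v_j)$ is nonzero for some $u \sim v_j$), so a generic rational $\mathbf{c}$ yields $\psi'$ with the strict-increase property at every vertex. Since the Poisson system of (i) has integer data, $\psi$ and hence $\psi'$ take rational values; clearing denominators and subtracting $\psi'(\id)$ then produces the desired harmonic \ughf\ $(h, \sH)$, settling part (iii).

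The crux of the argument lies in the solvability condition $\sum_i w_i b_i = 0$ in part (i): this is precisely where unimodularity of $\sH$ enters, via the mass-transport principle, and it is what ensures compatibility of the finite Poisson system on the orbit graph. By contrast, the genericity argument in part (ii) is comparatively routine once the appropriate affine forms on $\QQ^B$ have been identified, and the passage to integer values in (iii) is a straightforward denominator-clearing.
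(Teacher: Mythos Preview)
Your approach is correct in substance and, for part (i), genuinely different from the paper's. The paper constructs $\psi$ probabilistically as $\psi(v) = \EE_v[F(X_T)]$, where $T$ is the hitting time of $\sH\id$ by simple random walk; harmonicity then follows by first showing (via mass transport \emph{within the single orbit} $\sH\id$) that $F$ is harmonic for the induced chain on $\sH\id$, and uniqueness comes from an optional-stopping argument under a growth bound. Your route instead reduces directly to a finite Laplacian system on the orbit graph and invokes Fredholm-type solvability, using a \emph{weighted, multi-orbit} mass-transport identity. This is more elementary in that no random-walk machinery is needed, and it makes the rationality in part (iii) immediate rather than requiring the separate linear-algebra argument the paper gives in Proposition~7.2(c). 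For parts (ii) and (iii) the two proofs coincide: both form $\Ga$-translates of $\psi$ (which are $\sH$-difference-invariant by normality of $\sH$) and take a rational combination that is good at every orbit representative; the paper does this iteratively, you do it generically.

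Two technical points deserve attention. First, the weights $w_i = |\Stab^{\sH}_{v_i}|^{-1}$ are ill-defined when stabilizers are infinite, which can occur for general $G \in \sG$ (for instance, on regular trees). The fix is standard: replace cardinality by Haar measure on the locally compact group $\sH$, so that $w_i = \mu(\Stab^{\sH}_{v_i})^{-1}$; the detailed-balance relation $w_i n_{ij} = w_j n_{ji}$ and the weighted mass-transport identity then hold exactly as you use them. Second, your homomorphism $\phi$ is built from $F$ on the single orbit $\sH\id$, and you assert without proof that it vanishes on \emph{every} vertex stabilizer $\Stab^{\sH}_{v_j}$; this is needed for $\psi(\g v_j) := \psi(v_j) + \phi(\g)$ to be well defined. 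It is not automatic from the homomorphism property alone, since $\Stab^{\sH}_{v_j} = \g_j \Stab^{\sH}_{\id} \g_j^{-1}$ with $\g_j \in \Ga$ not necessarily in $\sH$. The clean fix is to observe that $\phi(\g) = F(\g\id) - F(\id)$ is continuous for the topology of pointwise convergence (it factors through the continuous map $\g \mapsto \g\id$ into the discrete set $V$), so $\phi$ vanishes on every compact subgroup of $\sH$, and in particular on each stabilizer.
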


The first part of condition (c) is to be interpreted as saying that \eqref{eq:hdi2} holds
for $v,w\in \sH\id$ and $\g \in \sH$. 
Since $G$ is transitive, the choice of origin $\id$ is arbitrary, and hence
the orbit $\sH\id$ may be replaced by any orbit of $\sH$.

One application of Theorem \ref{prop:indep}, or more precisely
of its method of proof, is the proof of the existence of \ghf s for graphs
with quasi-transitive, non-unimodular automorphism subgroups.
See Section \ref{sec:nonunim} for the proof of the following.

\begin{theorem}\label{cor:nonunim}
Let $G=(V,E)\in\sG$.
Suppose there exist a subgroup $\Ga\le \Aut(G)$ acting
transitively on $V$, and a normal subgroup $\sH\normal\Ga$ satisfying
$[\Ga:\sH] < \oo$,   such that $\sH$ is  non-unimodular. 
Then $G$ has a \ghf\ $(h, \sH)$, which is not generally harmonic.
\end{theorem}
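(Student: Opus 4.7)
The plan is to follow the strategy of Theorem~\ref{prop:indep}, substituting the modular function of $\sH$ for the hypothesized non-constant \hdi\ function $F$, and omitting the harmonicity conclusion. The mass-transport identity that confers harmonicity in Theorem~\ref{prop:indep}(i) requires unimodularity, which is precisely why the resulting height function is not generally harmonic.

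First, I construct the analogue of $F$. Viewed as a closed subgroup of $\Aut(G)$, $\sH$ has compact vertex-stabilizers, and so carries a continuous modular homomorphism $\Delta\colon \sH\to\RR_{>0}$, given by $\Delta(\g) = |\Stab_\id\,\g\id|/|\Stab_{\g\id}\,\id|$.  Since $\RR_{>0}$ has no non-trivial compact subgroup, $\Delta$ is trivial on each vertex-stabilizer, so
\[
F(v) := \log\Delta(\g), \qq \g \in \sH,\ \g\id = v,
\]
is a well-defined, rational-valued function on $\sH\id$ with $F(\id)=0$; the homomorphism property of $\Delta$ makes $F$ \hdi. Non-unimodularity of $\sH$ yields some $h\in\sH$ with $\Delta(h)\ne 1$; since $\Delta|_{\Stab_\id}\equiv 1$, we have $h\id\ne\id$ and $F(h\id)\ne 0$, so $F$ is non-constant on $\sH\id$. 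Because $\sH\normal\Ga$ and $[\Ga:\sH]<\oo$, $V$ decomposes into finitely many $\sH$-orbits, and one extends $F$ to $V$ by choosing orbit representatives $v_1=\id,v_2,\ldots,v_k$ and rationals $c_1=0,c_2,\ldots,c_k$ and setting $F(\a v_i):=\log\Delta(\a) + c_i$ for $\a\in\sH$; the same arguments show this extension is well-defined, rational-valued, and \hdi\ on all of $V$.

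The principal obstacle is upgrading $F$ to satisfy Definition~\ref{def:height}(c). In the unimodular setting of Theorem~\ref{prop:indep}, one passes through a harmonic extension of $F$ whose values at each non-constant vertex automatically straddle the vertex-value, by the discrete maximum principle. Without unimodularity no such harmonic extension is available, so the correction must be made by hand: choose the offsets $c_i$ and, if needed, add a bounded rational \hdi\ perturbation (following the construction used in the proof of Theorem~\ref{prop:indep}(ii)) so that every orbit representative $v_i$ has both a strictly higher and a strictly lower neighbour. Since the number of $\sH$-orbits of directed edges is finite, a finite combinatorial argument suffices. Clearing denominators yields an integer-valued $h\colon V\to\ZZ$ with $h(\id)=0$, and $(h,\sH)$ is the required \ghf. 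Harmonicity is not claimed, and cannot generally be obtained, because the averaging step that would confer it in Theorem~\ref{prop:indep}(i) depends essentially on unimodularity.
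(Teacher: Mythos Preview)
Your proposal has the right starting point—using the modular function $\Delta$ to build an \hdi\ function—but there is a genuine gap in the step where you upgrade $F$ to satisfy Definition~\ref{def:height}(c), and a minor error in the claim that $F$ is rational-valued.

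On the minor point first: the values $\log\Delta(\g)$ are logarithms of positive rationals, and these are transcendental whenever $\Delta(\g)\ne 1$. So $F$ is not rational-valued, and ``clearing denominators'' is not available as stated. This is fixable (compose $\Delta$ with a surjective homomorphism from its finitely generated image in $\QQ_{>0}$ onto $\ZZ$), but it is not what you wrote.

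The substantive gap is the point-of-increase step. You invoke ``the construction used in the proof of Theorem~\ref{prop:indep}(ii)'', but that construction (Proposition~\ref{prop12}) rests on the dichotomy \eqref{eq:fork}: for a \emph{harmonic} function, a vertex that is not a point of increase must have the function constant on its closed neighbourhood. It is precisely this dichotomy that makes the iterative perturbation by the shifts $\psi_i$ succeed. Your $F$ is not harmonic, so \eqref{eq:fork} fails; a vertex can be a strict local maximum of $F$, and then no small perturbation within your family repairs it. Moreover, the shifts $F(\g_i^{-1}\,\cdot\,)$ differ from $F$ only by an additive constant (since $M$ is automorphism-invariant up to a multiplicative constant), so they contribute nothing new. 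Your remaining freedom—the orbit offsets $c_i$—is a $(k-1)$-dimensional family, and you give no argument that it is rich enough to force every orbit representative to be a point of increase. ``A finite combinatorial argument suffices'' is an assertion, not a proof.

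The paper closes this gap by a different route: it does \emph{not} work directly on $G$. Instead it forms the quotient $G'=G/\sS$, where $\sS\normal\Ga$ is generated by all $\sH$-stabilizers. On $G'$ the induced group $\sH'=\sH/\sS$ has trivial vertex-stabilizers and is therefore unimodular, so Theorem~\ref{prop:indep} applies in full to the triple $(G',\Ga/\sS,\sH')$ with input $F'(\sS v)=\log M(v)$. This yields a \emph{harmonic} \ghf\ $\psi'$ on $G'$, for which \eqref{eq:fork} and the perturbation argument are legitimately available. The lift $\psi(v):=\psi'(\sS v)$ is then checked to be an \hdi\ \ghf\ on $G$. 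The passage to the quotient is exactly what restores the harmonicity needed to run the argument you cite; without it, the point-of-increase property is unproved.
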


The proofs of Theorems \ref{prop:indep} and 
\ref{cor:nonunim} are inspired in part by the proofs of 
\cite[Sect.\ 3]{LeeP} where, \emph{inter alia}, it is explained
that some graphs support harmonic maps, taking values in a function space, with a 
property of equivariance in norm. In this paper,  we study \hdi, integer-valued harmonic functions.

\section{Group height functions}\label{sec:Cay}

We consider  Cayley graphs of
finitely generated groups next, and a type of graph height function
called a \lq \GHF'.  
Let $\Ga$ be a finitely generated group with presentation $\langle S\mid R\rangle$,
as in Section \ref{sec:not}. A \GHF\ on a Cayley graph $G$ of $\Ga$ may in fact be defined 
as a function on the group $\Ga$ itself, but it will be convenient that it acts
on the same domain as a \ghf\ (that is, on $G$ rather than on $\Ga$). 
When viewed as a function on the group $\Ga$,
a \GHF\ is essentially a surjective homomorphism to $\ZZ$, and such functions are 
of importance in group theory (see Remark \ref{rem:betti}). 

Each relator $\rho\in R$ is a
word of the form $\rho = t_1t_2\cdots t_r$ with $t_i \in S$ and $r\ge 1$, and we define the vector
$u(\rho) = (u_s(\rho): s\in S)$ by
$$
u_s(\rho) = |\{i: t_i = s\}| , \qq s\in S.
$$
Let $C$ be the $|R| \times |S|$ matrix with row vectors $u(\rho)$, $\rho \in R$,
called the \emph{coefficient matrix} of the presentation $\langle S\mid R \rangle$.
Its null space $\sN(C)$ is the set of column vectors $\g=(\g_s: s\in S)$ such that $C \g=\bzero$.
Since $C$ has integer entries, $\sN(C)$ is non-trivial if and only if it contains
a non-zero vector of integers (that is, an integer vector
other than the zero vector $\bzero$).
If $\g\in\ZZ^S$ is a non-zero element of $\sN(C)$, then $\g$ gives rise to
a function $h:V\to\ZZ$ defined as follows. Any $v \in V$ may be expressed as a word in the
alphabet $S$, which is to say that $v = s_1 s_2 \cdots s_m$ for some $s_i \in S$ and $m\ge 0$. We set
\begin{equation}\label{eq:gheight}
h(v) = \sum_{i=1}^m \g_{s_i}.
\end{equation}
Any function $h$ arising in this way is called a \emph{\GHF} of the presentation (or of the Cayley graph).
We see next that a \GHF\ is well defined by \eqref{eq:gheight},
and is indeed a graph height function in the sense of Definition
\ref{def:height}. A graph height function, even if unimodular, 
need not be a \GHF\ (see, for 
instance, Example (d) following Remark \ref{rem:betti}).

\begin{theorem}\label{thm3}
Let $G$ be the Cayley graph of the finitely generated group
$\Ga=\langle S\mid R\rangle$, with
coefficient matrix $C$.
\begin{letlist}
\item 
Let $\g=(\g_s: s\in S)\in\sN(C)$ satisfy $\g\in\ZZ^S$, $\g \ne \bzero$.
The \GHF\ $h$ given by \eqref{eq:gheight} is well defined, 
and gives rise to a \ughf\ $(h,\Ga)$ on $G$. 
\item
The Cayley graph $G(S,R)$ of the presentation $\langle S\mid R\rangle$ has a \GHF\ 
if and only if $\rank(C) < |S|$.
\item
A \GHF\ is a group invariant in the  sense that,
if $h$ is a \GHF\ of $G$, then it is also a \GHF\ for
the Cayley graph of any other presentation of $\Ga$.
\item
A \GHF\ $h$ of $G$ is harmonic, in that
$$
h(v) = \frac1{\deg(v)}\sum_{u \sim v} h(u), \qq v \in V.
$$ 
\end{letlist}
\end{theorem}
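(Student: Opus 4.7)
The plan is to handle (a)--(d) in order, with the substantive work concentrated in (a); (b) is then elementary linear algebra, (c) a homomorphism-invariance observation, and (d) a short computation relying on the inverse-encoding relators.

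For (a), I would first show that the assignment $s\mapsto\g_s$ on the alphabet $S$ extends to a well-defined group homomorphism $h:\Ga\to\ZZ$. Indeed, it defines a monoid homomorphism from the free monoid $S^*$ to $(\ZZ,+)$, and each relator $\rho=t_1\cdots t_r\in R$ is sent to $u(\rho)\cdot\g=(C\g)_\rho=0$, so the map descends to the quotient $\Ga$. This gives both well-definedness of \eqref{eq:gheight} and the identity $h(\g v)=h(\g)+h(v)$. The axioms of Definition \ref{def:height} then follow: $h(\id)=0$ (empty word); the homomorphism identity yields $\Ga$-difference-invariance with $\sH=\Ga$; and for the strict up/down-neighbour condition, the inverse relator $ss'\in R$ encoding $s^{-1}=s'$ forces $\g_s+\g_{s'}=0$, so since $\g\ne\bzero$ some $\g_s>0$ and then $\g_{s^{-1}}<0$, giving neighbours $vs$ and $vs^{-1}$ whose heights strictly exceed and fall below $h(v)$. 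Finally, $\Ga$ acts freely (hence with trivial stabilisers) on its Cayley graph by left multiplication, so $\Ga$ is unimodular and $(h,\Ga)$ is a \ughf.

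For (b), since $C$ has integer entries, $\sN(C)$ is $\QQ$-spanned by rational vectors, and clearing denominators produces a nonzero integer null vector whenever $\sN(C)\ne\{\bzero\}$, which is equivalent to $\rank(C)<|S|$. Combined with (a), this gives the equivalence. For (c), the function $h:\Ga\to\ZZ$ from (a) is a group homomorphism, a property intrinsic to $\Ga$ and independent of presentation. Given a second presentation $\langle S'\mid R'\rangle$ of the same $\Ga$, set $\g'_s:=h(s)$ for $s\in S'$; every $\rho'\in R'$ equals $\id$ in $\Ga$, so $(C'\g')_{\rho'}=h(\rho')=0$, i.e.\ $\g'\in\sN(C')$. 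The resulting \GHF\ of the new presentation agrees with $h$ on $\Ga$ by the homomorphism property, so $h$ is intrinsic to $\Ga$.

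For (d), the simple Cayley graph has neighbours of $v$ in bijection with $S$ via $s\mapsto vs$, so $\deg(v)=|S|$ and
\begin{equation*}
\frac1{\deg(v)}\sum_{u\sim v}h(u)=h(v)+\frac1{|S|}\sum_{s\in S}\g_s.
\end{equation*}
I then pair each $s\in S$ with its inverse $s^{-1}\in S$: if $s\ne s^{-1}$ the inverse relator yields $\g_s+\g_{s^{-1}}=0$, and if $s=s^{-1}$ the relator $s^2$ gives $2\g_s=0$ and hence $\g_s=0$. Thus $\sum_{s\in S}\g_s=0$ and harmonicity follows. The principal obstacle is the descent argument in (a), which depends crucially on the paper's convention that inverses are encoded by additional generators and relators in $R$; once this is in place, parts (b)--(d) are short.
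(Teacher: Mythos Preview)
Your proof is correct. Parts (a) and (b) follow essentially the same approach as the paper, though your phrasing of (a) as descent of a homomorphism from the free group is a bit cleaner than the paper's closed-walk argument (which forms the loop $\pi_1\pi_2^{-1}$ and decomposes it in the cycle space generated by $\Ga R$). For (c), the paper simply cites Remark~\ref{rem:betti} and omits the direct argument; your proof is exactly the one the paper suppresses.

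The genuine difference is in (d). You give a direct elementary computation: pair each $s\in S$ with its inverse in $S$ and use the inverse relators to obtain $\sum_{s\in S}\g_s=0$, whence the neighbour-average equals $h(v)$. The paper instead defers to Proposition~\ref{prop11}(b), which establishes $P_1$-harmonicity of any \hdi\ function via the mass-transport principle and unimodularity of $\sH$, specialised here to $\sH=\Ga$ acting freely. Your route is shorter and exploits the specific structure of a \GHF\ (namely that the generator set is closed under inversion with the inverse relators forcing $\g_s+\g_{s^{-1}}=0$); the paper's route is more general, applying to any \hdi\ function on a unimodular orbit, which is why it is packaged separately for reuse in Theorem~\ref{prop:indep}. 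One small caveat: your identity $\deg(v)=|S|$ tacitly assumes the map $s\mapsto vs$ from $S$ to the neighbour set is a bijection, i.e.\ that distinct generator symbols represent distinct group elements; this is implicit in the paper's setup but worth stating.
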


Since the \GHF\ $h$ of \eqref{eq:gheight} is a graph height function, and $\Ga$
acts transitively, 
\begin{equation}\label{def:d}
d(h) = \max\{\g_s: s \in S\},
\end{equation}
in agreement with \eqref{eq:defd}.
In the light of Theorem \ref{thm3}(c), we may speak of a group possessing a \GHF.

\begin{remark}\label{rem:betti}
The quantity $b(\Ga):=|S|-\rank(C)$ is in fact 
an invariant of $\Ga$, and may be
called the \emph{first Betti number} since it 
equals the power of $\ZZ$ in the abelianization $\Ga/[\Ga,\Ga]$
(see, for example, \cite[Chap.\ 8]{LedW}). 
Group height functions are a standard tool of group theorists, since they are
(when the non-zero $\g_s$ are coprime) surjective homomorphisms
from  $\Ga$ to $\ZZ$. This fact is used, for example,  in the proof of \cite[Thm 4.1]{GL-amen},
which asserts that the Cayley graph of an infinite, finitely generated, elementary amenable group
possesses a harmonic (unimodular) \ghf.  
Further details may be found in \cite{Hill94, Hill09}.

Although some of the arguments of the current paper are standard within group theory, 
we prefer to include sufficient details to aid readers from other backgrounds. 
\end{remark}

It follows in particular from Theorem \ref{thm3} that $G$ has a \GHF\ if $|R| < |S|$,
which is to say that the presentation $\Ga=\langle S \mid R\rangle$ has strictly 
positive deficiency (see \cite[p.\ 419]{Rob}). Free groups
provide examples of such groups.

Consider for illustration the examples of \cite[Sect.\ 3]{GL-loc}.
\begin{letlist}
\item
The \emph{hypercubic lattice} $\ZZ^n$ is the Cayley group of an abelian 
group with $|S|=2n$, $|R|=n+\binom n2$, and $\rank(C)=n$.
It has  a  \GHF\ (in fact, it has many, indexed by the non-zero, integer-valued
elements of $\sN(C)$).

\item The \emph{$3$-regular tree} is the Cayley graph of the
group with $S=\{s_1,s_2,t\}$, $R=\{s_1t,s_2^2\}$, and $\rank(C)=2$.
It has a \GHF.

\item The \emph{discrete Heisenberg group} has $|S|=|R|=6$
and $\rank(C)=4$. It has a \GHF.

\item
The \emph{square/octagon lattice} is
the Cayley graph of a finitely presented group with $|S|=3$ and
$|R|=5$, and this
does not satisfy the hypothesis of Theorem \ref{thm3}(b) (since $\rank(C)=3$). 
This presentation has no \GHF. Neither
does the lattice have a graph height function with automorphism 
subgroup acting transitively, but nevertheless
it possesses a \ughf\ in the sense of 
Definition \ref{def:height}, as explained in 
\cite[Sect.\ 3]{GL-loc}.

\item The \emph{hexagonal lattice} is the Cayley graph of
the finitely presented group with $S=\{s_1,s_2,s_3\}$ and
$R=\{s_1^2,s_2s_3, s_1s_2^2s_1s_3^2\}$. Thus, $|R|=|S|=3$,
$\rank(C)=2$, and the graph has a \GHF.

\end{letlist}

A discussion is presented in Section \ref{sce} of certain types of infinite groups
whose Cayley graphs have group or graph height functions. We describe next
some illustrative examples and a question.
The next proposition is extended in Theorem \ref{iq}.

\begin{proposition}\label{prop:abel}
Any finitely generated group which is infinite and abelian has a 
\GHF\ $h$ with $d(h)=1$.
\end{proposition}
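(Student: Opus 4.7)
The plan is to construct a concrete presentation of $\Gamma$ together with an explicit integer null vector of its coefficient matrix, and then invoke Theorem~\ref{thm3}(a). By the structure theorem for finitely generated abelian groups, $\Gamma \cong \ZZ^n \times T$ where $n \ge 1$ (since $\Gamma$ is infinite) and $T \cong \ZZ/m_1 \times \cdots \times \ZZ/m_k$ is finite. Following the paper's convention for Cayley-graph presentations, I would take the generator set $S = \{x_i, X_i : 1 \le i \le n\} \cup \{y_j, Y_j : 1 \le j \le k\}$, where $X_i$ and $Y_j$ are formal symbols denoting the inverses of $x_i$ and $y_j$, respectively.

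The relator set $R$ would consist of four families: the inverse-pair relators $x_i X_i$ and $y_j Y_j$; the torsion relators $y_j^{m_j}$; and all commutator relators of the form $s t s' t'$, where $s, t$ range over distinct generators in $\{x_1,\dots,x_n,y_1,\dots,y_k\}$ and $s', t'$ denote the corresponding inverse symbols. A standard application of the normal form for elements of $\ZZ^n \times T$ confirms that $\langle S \mid R\rangle$ does present $\Gamma$. Define $\gamma = (\gamma_s : s \in S) \in \ZZ^S$ by setting $\gamma_{x_1} = 1$, $\gamma_{X_1} = -1$, and $\gamma_s = 0$ for every other $s \in S$; this is evidently a nonzero integer vector.

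The verification that $\gamma \in \sN(C)$ reduces to inspection of the four relator families: the inverse-pair relators contribute $\gamma_s + \gamma_{s'} = 0$ in every case (including $x_1 X_1$, where $+1$ and $-1$ cancel); each torsion relator contributes $m_j \gamma_{y_j} = 0$; and each commutator relator contributes $\gamma_s + \gamma_t + \gamma_{s'} + \gamma_{t'} = 0$, since the contribution from each generator is cancelled by that from its inverse. By Theorem~\ref{thm3}(a), the resulting function $h$ of \eqref{eq:gheight} is a well-defined \GHF, and $(h,\Gamma)$ is a \ughf. Finally, \eqref{def:d} yields $d(h) = \max_{s \in S} \gamma_s = 1$, as required.

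I do not anticipate a substantive obstacle: the argument is a direct construction. The only points requiring care are the bookkeeping needed to confirm that $\langle S\mid R\rangle$ really is a presentation of $\Gamma$ in the paper's sense, and the systematic check that every relator family vanishes against $\gamma$; neither step involves anything beyond standard abelian group theory.
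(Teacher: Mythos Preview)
Your proposal is correct and follows essentially the same approach as the paper: single out a generator of infinite order, set $\gamma$ to be $\pm 1$ on that generator and its formal inverse and $0$ elsewhere, and verify that $\gamma$ annihilates every relator. The only difference is one of packaging: the paper works with an arbitrary presentation $\langle S\mid R\rangle$, picks an infinite-order $\sigma\in S$, and asserts (with a pointer to Remark~\ref{rem:betti}) that every relator contains $\sigma$ and $\sigma^{-1}$ equally often, whereas you first fix a standard presentation coming from the structure theorem and then check each relator family by hand. Your version is more explicit and self-contained; the paper's is terser but leans on the Betti-number remark for the general case.
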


\begin{example}\label{ex:ladder}
The \emph{infinite dihedral group} 
$\dih_\oo=\langle s_1,s_2\mid  s_1^2,s_2^2 \rangle$ is an example of an infinite, 
finitely generated group $\Ga$ which has no \GHF.
The Cayley graph of $\Ga$  is the
line $\ZZ$ with (harmonic) \ughf\ $(h,\sH)$, where $h$ is
the identity and $\sH$ is the group of shifts. 
This example of a solvable group is extended in
Theorem \ref{thm:vs}.
\end{example}

\begin{example}\label{ex:higman}
The \emph{Higman group}  $\Ga$  of \cite{gh} is an infinite, finitely presented group with presentation $\Ga=\langle S \mid R\rangle$ where
\begin{align*}
S&=\{ a,b,c,d,a',b',c',d'\},\\ 
R&=\{aa',bb',cc',dd'\}\cup \{a'ba(b')^2,b'cb(c')^2,c'dc(d')^2,
d'ad(a')^2\}.
\end{align*}
The quotient of $\Ga$ by its maximal proper normal 
subgroup is an infinite, finitely generated, simple group. 
By Theorem \ref{thm3}(b), $\Ga$ has no \GHF.
\end{example}

\begin{remark}\label{rem:grig0}
Since writing this paper, the authors have shown 
in \cite{GL-amen} that the Cayley graph of  the Higman group 
does not possess a \ghf.  
\end{remark}

\begin{proof}[Proof of Theorem \ref{thm3}]
(a) Let $\g$ be as given. To check that $h$ is well defined by \eqref{eq:gheight},
we must show that $h(v)$ is independent of the chosen representation
of $v$ as a word. Suppose that
$v=s_1\cdots s_m=u_1\cdots u_n$ with $s_i, u_j \in S$,
and extend the definition of $\g$ to the directed edge-set of $G$ by
\begin{equation}\label{eq:ga}
\g([g,gs\rangle) = \g_s, \qq g \in \Ga,\ s \in S.
\end{equation}
The
walk $(\id,s_1,s_1s_2,\dots,v)$ is denoted as $\pi_1$, and
$(\id,u_1,u_1u_2,\dots,v)$ as $\pi_2$, and the
latter's reversed walk as $\pi_2^{-1}$.
Consider the walk $\nu$ obtained by
following $\pi_1$, followed by $\pi_2^{-1}$.
Thus $\nu$ is a closed walk of $G$ from $\id$.

Any $\rho\in R$ gives rise to a directed cycle in $G$ through
$\id$, and we write $\Ga R$ for the set of images of such
cycles under the action of $\Ga$.
Any closed walk lies in the vector space over $\ZZ$ generated by
the directed cycles of  $\Ga R$ (see, for example, \cite[Sect.\ 4.1]{Ham}).
The sum of the $\g_s$ around any $g \rho\in \Ga R$ is zero, by 
\eqref{eq:ga} and the fact that $C\g=\bzero$. Hence
\begin{equation*}
\sum_{i=1}^m \g_{s_i}-  \sum_{j=1}^n \g_{u_j}=0,
\end{equation*}
as required.

We check next that $(h,\Ga)$ is a \ghf. Certainly, $h(\id)=0$.
For $u,v\in V$, write $v=ux$ where $x=u^{-1}v$,
so that $h(v)-h(u) = h(x)$ by \eqref{eq:gheight}.
For $g\in \Ga$, we have that $gv = (gu)x$, whence
\begin{equation}\label{eq:extra1}
h(gv)-h(gu) = h(x) = h(v)-h(u).
\end{equation}
Since $\g \ne \bzero$, there exists $s\in S$ with $\g_s>0$.
For $v \in V$, we have $h(vs^{-1}) < h(v) < h(vs)$. 

(b) The null space $\sN(C)$ is non-trivial if and only if $\rank(C)<|S|$.
Since $C$ has integer entries and $|S|<\oo$, $\sN(C)$ is non-trivial if and only if it contains a non-zero 
vector of integers. 

(c) See Remark \ref{rem:betti}. This may also be proved directly, but we omit
the details.

(d) 
We do not give the details of this, since a more general fact is proved in 
Proposition \ref{prop11}(b). The current proof follows that of the latter proposition
with $\sH=\Ga$, $F=h$, and $\Ga$ acting on $V$ by left-multiplication. Since 
this action of $\Ga$
has no non-trivial fixed points, $\Ga$ is unimodular.
\end{proof}

\begin{proof}[Proof of Proposition \ref{prop:abel}]
See Remark \ref{rem:betti} for an elementary group-theoretic explanation.
Since $\Ga$ is infinite and abelian, there exists a generator, $\si$ say,
of infinite order. For $s\in S$, let
\begin{equation}\label{eq:unit}
\g_s = \begin{cases} 1 &\text{if } s=\si,\\
-1 &\text{if } s=\si^{-1},\\
0 &\text{otherwise}.
\end{cases}
\end{equation}
Since any relator must contain equal numbers of appearances of $\si$ and $\si^{-1}$,
we have that $\g \in \sN(C)$. Therefore, the function $h$ of \eqref{eq:gheight}
is a \GHF. 
\end{proof}

\begin{comment}
\begin{proof}[Proof of Theorem \ref{q3}]
This is almost immediate from the fact that a \GHF\ 
acts on the \emph{vertices} of the Cayley graph, that is,
on the group $\Ga$ itself. Let $h:\Ga\to\ZZ$ be a \GHF\ 
of the Cayley graph $G$ of $\langle S \mid R\rangle$, and let $G'$
be the Cayley graph of the (possibly different) finite presentation  $\langle S' \mid R'\rangle$
of $\Ga$. We write $\g_s=h(s)$ for $s \in S$.

We define $\g'= (\g'_{s'}: s'\in S')$ by $\g_{s'}=h(s')$. 
Since $h$ is non-constant and satisfies \eqref{eq:extra1}, $\g' \ne \bzero$.
A relator $\rho'\in R'$ is a product of elements $s'_i\in S'$, and each such $s'_i$
is a product of elements $s^i_j \in S$. Thus the sum of $\g_{s'}'$
around the directed cycle of $G'$ corresponding to $\rho'$ equals the sum
of the corresponding $\g_{s^i_j}$. Since this is a sum around a closed walk of $G$,
it equals $0$ as in the proof of Theorem \ref{thm3}. Therefore,
$h$ is a \GHF\ of $G'$.
\end{proof}
\end{comment}

\section{Cayley graphs with height functions}\label{sce}

The main result of this section is as follows. The associated definitions are presented later,
and the proofs of the next two theorems are at the end of this section.

\begin{theorem}\label{thm:vs}
Let $\Ga$ be an infinite, finitely generated group with a normal 
subgroup $\Ga^*$ satisfying $[\Ga:\Ga^*]<\oo$. 
Let $q=\sup\{i: [\Ga^*: \Ga^*_{(i)}]<\oo\}$ where $(\Ga^*_{(i)} : i \ge 1)$ 
is the derived series of $\Ga^*$.
If $q<\oo$ and $[\Ga^*_{(q)},\Ga^*_{(q+1)}]=\oo$, 
then every Cayley graph of $\Ga$ has a \ughf\ 
of the form $(h,\Ga^*_{(q)})$ which is harmonic.
\end{theorem}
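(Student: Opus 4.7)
The plan is to apply Theorem \ref{prop:indep} with $\Ga$ acting on its Cayley graph $G$ by left multiplication and with the candidate normal subgroup $\sH:=\Ga^*_{(q)}$. Of the three hypotheses of Theorem \ref{prop:indep}, (a) and (b) are essentially formal. Left multiplication by $\Ga\le\Aut(G)$ is a free, hence transitive, action on $V=\Ga$. The subgroup $\sH$ is an iterated derived subgroup of $\Ga^*$, hence characteristic in $\Ga^*$; combined with $\Ga^*\normal\Ga$, this gives $\sH\normal\Ga$. The index chain $[\Ga:\sH]=[\Ga:\Ga^*]\cdot[\Ga^*:\Ga^*_{(q)}]$ is finite by the standing hypothesis and the definition of $q$. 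Finally, $\sH$ acts freely on $V$, so every $\sH$-stabilizer is trivial and the unimodularity criterion \eqref{g804} is vacuous.

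The substantive step is condition (c): the construction of a non-constant, \hdi\ function $F:\sH\id\to\ZZ$. Identifying $V$ with $\Ga$ so that $\sH\id=\sH$, and normalizing $F(\id)=0$, the difference-invariance requirement $F(v)-F(w)=F(\g v)-F(\g w)$ for $v,w,\g\in\sH$ reduces, on setting $w=\id$, to $F(\g v)=F(\g)+F(v)$; conversely, any group homomorphism $F:\sH\to\ZZ$ is automatically \hdi\ since $\ZZ$ is abelian. It therefore suffices to produce a non-trivial homomorphism $\Ga^*_{(q)}\to\ZZ$. Two inputs combine to deliver one: first, $\Ga^*_{(q)}$ is finitely generated, since it has finite index in the finitely generated group $\Ga$ (Schreier's lemma); second, the hypothesis (read as $[\Ga^*_{(q)}:\Ga^*_{(q+1)}]=\oo$, i.e., the abelianization $\Ga^*_{(q)}/[\Ga^*_{(q)},\Ga^*_{(q)}]$ is infinite) together with the structure theorem for finitely generated abelian groups forces a $\ZZ$-summand in this abelianization. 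Composing the abelianization quotient with projection onto that summand produces the desired $F$.

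With (a)--(c) in place, part (iii) of Theorem \ref{prop:indep} directly supplies the required harmonic, unimodular \ghf\ $(h,\sH)$. The main obstacle is condition (c), and in particular the passage from the purely group-theoretic hypothesis on the derived series to the existence of a homomorphism onto $\ZZ$; this rests on two non-trivial facts, namely the preservation of finite generation under finite-index subgroups and the structure theorem for finitely generated abelian groups. The reinterpretation of the \hdi\ condition on an orbit of a free action as an ordinary homomorphism condition is the clean observation that makes the abelian machinery applicable; once that is in hand, the rest of the proof is routine invocation of Theorem \ref{prop:indep}.
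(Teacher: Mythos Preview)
Your proof is correct and follows essentially the same route as the paper's. The paper packages the argument through the intermediate Theorem~\ref{iq}: part~(a) is applied to $\Ga^*_{(q+1)}\normal\Ga^*_{(q)}$ to produce a group height function on $\Ga^*_{(q)}$ (this is exactly your homomorphism to $\ZZ$, cf.\ Remark~\ref{rem:betti}), and part~(b), whose proof is nothing but an invocation of Theorem~\ref{prop:indep}, is then applied to $\Ga^*_{(q)}\normal\Ga$. Your explicit identification of the \hdi\ condition on a freely-acted orbit with the homomorphism condition is a clean way to state what the paper leaves implicit in the group-height-function language, and your reading of the hypothesis as $[\Ga^*_{(q)}:\Ga^*_{(q+1)}]=\oo$ (index, not commutator) is the intended one, as the application of Theorem~\ref{iq}(a) in the paper's proof confirms.
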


The theorem may be applied to any finitely generated, 
virtually solvable group $\Ga$, and more generally whenever the derived series
of $\Ga^*$ terminates after finitely many steps at a \emph{finite} perfect group.

In preparation for the proof, we present  a general
construction of a height function
for a group having  a normal subgroup. 
Part (a) extends Proposition \ref{prop:abel} (see also Remark \ref{rem:betti}). 
 
\begin{theorem}\label{iq}
Let $\Ga$ be an infinite, finitely generated group, and let $\Ga'\normal \Ga$.
\begin{letlist}
\item If the quotient group $\Ga/\Ga'$ is infinite and abelian, 
then  $\Ga$ has a \GHF\ $h$ with $d(h)=1$.
\item If the quotient group $\Ga/\Ga'$ is finite, and $\Ga'$ has
a \GHF, then every Cayley graph of $\Ga$ has a harmonic, \ughf\ of the form $(h, \Ga')$. 
\end{letlist}
\end{theorem}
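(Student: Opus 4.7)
For part (a), the plan is to construct a surjective homomorphism $\psi : \Ga \to \ZZ$ and realise it as a \GHF\ by engineering the generating set so that $\psi$ takes values in $\{-1, 0, 1\}$ on the generators. Since $\Ga/\Ga'$ is infinite, finitely generated, and abelian, it has an element of infinite order, and hence admits a surjection to $\ZZ$; composition with the quotient map $\Ga\to\Ga/\Ga'$ gives a surjective homomorphism $\psi : \Ga \to \ZZ$. Starting from any finite generating set $S$ of $\Ga$, I would choose $g \in \Ga$ with $\psi(g) = 1$ and, for each $s \in S$, set $s^* := g^{-\psi(s)}s \in \ker\psi$. The finite set
\[
S^* := \{g, g^{-1}\} \cup \{s^*,\, (s^*)^{-1} : s \in S,\ s^* \ne \id\}
\]
still generates $\Ga$ because $s = g^{\psi(s)} s^*$, and satisfies $\psi(t) \in \{-1, 0, 1\}$ for all $t \in S^*$. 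For any resulting presentation $\Ga = \langle S^* \mid R^* \rangle$ with coefficient matrix $C^*$, the vector $\g_t := \psi(t)$ lies in $\sN(C^*)$ since each relator evaluates to $\id$ and $\psi$ is a homomorphism, and $\g \ne \bzero$ because $\g_g = 1$. Theorem \ref{thm3}(a) then delivers a \GHF\ $h$ with $d(h) = \max_t|\g_t| = 1$.

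For part (b), the plan is a direct application of Theorem \ref{prop:indep}. Let $G$ be any Cayley graph of $\Ga$, with vertex set $V = \Ga$, and let $\Ga$ act on $V$ by left-multiplication, giving graph automorphisms. Condition (a) of Theorem \ref{prop:indep} (transitivity) is then immediate; condition (b) amounts to the assumed normality and finite index of $\Ga' \normal \Ga$ together with unimodularity of $\Ga'$, which holds because $\Ga'$ acts freely on $\Ga$ (all stabilizers trivial), so both sides of \eqref{g804} equal $1$. For condition (c), take $F : \Ga' \id = \Ga' \to \ZZ$ to be the given \GHF\ $h'$ of $\Ga'$. Because $h'$ is defined by the additive formula \eqref{eq:gheight}, it is a homomorphism on $\Ga'$, so $h'(v) - h'(w) = h'(w^{-1} v)$ for $v, w \in \Ga'$, and therefore $h'(\g v) - h'(\g w) = h'(w^{-1} v) = h'(v) - h'(w)$ for $\g \in \Ga'$; this is precisely the \hdi\ property, and non-constancy of $F$ is part of the definition of a \GHF. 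Theorem \ref{prop:indep}(iii) now produces a harmonic, \ughf\ of the form $(h, \Ga')$ on $G$, and the argument applies to every Cayley graph of $\Ga$.

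The principal obstacle is in part (a): achieving $d(h) = 1$ is not automatic, since the naive choice $\g_s = \psi(s)$ on an arbitrary generating set could yield $|\psi(s)|$ much larger than $1$. The key idea is to adapt the generators (rather than $\psi$) so that $\psi$ takes only the values $-1$, $0$, $1$ on $S^*$; this is possible precisely because $\psi$ is surjective. Once the replacement $s \mapsto g^{-\psi(s)}s$ is made, the well-definedness of the induced \GHF\ follows from $\psi$ being a homomorphism, and $\g \ne \bzero$ is clear from $\psi(g) = 1$. Part (b) is essentially bookkeeping against Theorem \ref{prop:indep}; the only mildly delicate point is verifying the \hdi\ property of $h'$ on its own orbit, which reduces to the homomorphism property of the \GHF\ $h'$ on $\Ga'$.
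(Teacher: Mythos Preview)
Your proof of part (b) is essentially the paper's: both apply Theorem~\ref{prop:indep} with $\sH=\Ga'$ acting by left-multiplication and $F$ equal to the given \GHF\ $h'$ of $\Ga'$, noting that this action is free (hence unimodular). The extra detail you supply on the \hdi\ property of $h'$ is the same verification the paper leaves implicit.

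For part (a), your route differs from the paper's outline. The paper keeps an arbitrary presentation $\langle S\mid R\rangle$, selects $\si\in S$ whose coset in $\Ga/\Ga'$ has infinite order, and sets $\g_s\in\{1,-1,0\}$ according as $s\Ga'$ equals $\si\Ga'$, $\si^{-1}\Ga'$, or neither, then asserts $C\g=\bzero$. You instead replace the generating set by a new $S^*$ engineered so that a fixed surjection $\psi:\Ga\to\ZZ$ takes only the values $\pm1,0$ on generators. Both arguments rest on the same underlying fact --- the existence of a surjective homomorphism $\Ga\to\ZZ$ (Remark~\ref{rem:betti}) --- so the existence of \emph{some} \GHF\ is not at issue. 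What your construction buys is a transparent proof that $d(h)=1$, at the cost of exhibiting the \GHF\ on one particular Cayley graph rather than on the given one. This trade is actually necessary: the paper's recipe does not in general produce a null vector on an arbitrary generating set (present $\ZZ$ with generators $a,b,A,B$ standing for $\pm2,\pm3$, take $\si=a$, and the relator $a^3B^2$ encoding $2+2+2-3-3=0$ yields row-sum $3$, not $0$). So your change of generating set is the substantive step, not a detour.
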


Recall that $\Ga/\Ga'$ is abelian if and only if $\Ga'$ contains the commutator
group $[\Ga,\Ga]$, of which the definition follows. 
An example of  Theorem \ref{iq}(b) in action is the special linear group 
$\SL_2(\ZZ)$ of the forthcoming Example \ref{ex:sl2z} (see \cite[p.\ 66]{dlH}).

We turn now towards solvable groups.
Let $\Ga$ be a group with identity $\id_\Ga$. 
The \emph{commutator} of the pair  $x,y\in \Ga$ is the group element
$[x,y]:=x^{-1}y^{-1}xy$.
Let $A$, $B$ be subgroups of $\Ga$. The \emph{commutator subgroup} $[A,B]$ is defined to be
\begin{equation*}
[A,B]=\bigl\langle [a,b] : a\in A,\ b\in B\bigr\rangle,
\end{equation*}
that is, the subgroup generated by all commutators $[a,b]$ with $a\in A$, $b\in B$. 
The \emph{commutator subgroup} of $\Ga$ is the subgroup $[\Ga,\Ga]$.
It is standard that $[\Ga,\Ga]\normal\Ga$, and the quotient group
$\Ga/[\Ga,\Ga]$ is abelian. The group $\Ga$ is called \emph{perfect}
if $\Ga=[\Ga,\Ga]$.

Here is an explanation of the terms in Theorem \ref{thm:vs}.
Let $\Ga_{(1)}=\Ga$. The \emph{derived series} of $\Ga$ is given recursively by the formula
\begin{equation}\label{eq:deriv}
\Ga_{(i+1)}=[\Ga_{(i)},\Ga_{(i)}], \qq i \ge 1.
\end{equation}
The group $\Ga$ is called \emph{solvable} if there exists an integer $c\in\NN$ 
such that $\Ga_{(c+1)}=\{\id_\Ga\}$. Thus, $\Ga$ is solvable if
there exists $c \in \NN$ such that
\begin{equation*}
\Ga=\Ga_{(1)}\normalgr \Ga_{(2)}\normalgr \cdots \normalgr \Ga_{(c+1)}=\{\id_{\Ga}\}.
\end{equation*}
A \emph{virtually solvable group} is a group $\Ga$ for which there exists a normal subgroup
$\Ga^*$  which is solvable and satisfies $[\Ga: \Ga^*] < \oo$.
The reader is referred to \cite{LedW,Rob} for general accounts of group theory. 

\begin{example}\label{ex:lampl}
Here is an example of a finitely generated but not finitely presented group with a \GHF.
The \emph{lamplighter group} $L$ has presentation
$\langle S\mid R\rangle$ where $S=\{a,t,u\}$ and $R=
\{a^2, tu\}\cup\{[a,t^nau^{n}]: n\in \ZZ\}$.
It has a \GHF\ since the rank of its coefficient matrix is $2$.
A recent reference to linear-growth harmonic functions on $L$ is \cite{BDCKY}.
\end{example}

\begin{example}\label{ex:sl2z}
The special linear group $\Ga:=\SL_2(\ZZ)$ has a presentation
\begin{equation} \label{sl2z}
\Ga=\langle x,y,u,v\mid xu, yv, x^4, x^2v^{3} \rangle,
\end{equation}
where
\begin{equation*}
x=\begin{pmatrix}0&-1\\1 &0\end{pmatrix},\qq
 y=\begin{pmatrix}0&-1\\1&1\end{pmatrix}.
\end{equation*}
The presentation has no \GHF.

The commutator subgroup $\Ga^{(2)}:=[\Ga,\Ga]$ is a normal 
subgroup of $\Ga$ with index $12$, and $\Ga^{(2)}$ is free of rank $2$. 
(See  \cite{Kon}
and \cite[p.\ 66]{dlH}.) By Theorem \ref{iq}(b),
every Cayley group of $\Ga$ has a harmonic, \ughf.
\end{example}

\begin{proof}[Proof of Theorem \ref{iq}]
(a) This is an immediate consequence of Remark \ref{rem:betti}. A detailed argument
may be outlined as follows. Let $\Ga=\langle S\mid R\rangle$. 
If $Q:=\Ga/\Ga'$ is infinite and abelian, it is generated
 by the cosets $\{\ol s:=s\Ga': s \in S\}$, 
and its relators are the
words $\ol s_1\ol s_2\cdots \ol s_r$ as  $\rho=s_1s_2\cdots s_r$ ranges
over $R$.  Choose $\si\in S$ with infinite order, and let 
\begin{equation}\label{eq:unit2}
\g_s=\begin{cases} 1 &\text{if } s\in \ol\si,\\
-1 &\text{if } s^{-1} \in  \ol\si,\\
0 &\text{otherwise}.
\end{cases}
\end{equation}
It may now be checked that $C\g=\bzero$ where $C$ is the coefficient 
matrix. 

\noindent
(b) 
Let $G$ be a Cayley graph of $\Ga$, and let $\Ga'\normal \Ga$
satisfy $[\Ga:\Ga']<\oo$. 
By assumption, $\Ga'$ has a \GHF\ $h'$. 
The subgroup $\Ga'$ of $\Ga$ acts on $G$ by left-multiplication, and it
is unimodular since its elements act with no non-trivial fixed points.
We apply Theorem \ref{prop:indep} with $\sH=\Ga'$ and $F=h'$ 
to obtain a harmonic, \ughf\ $(h,\Ga')$ on $G$.
\end{proof}

\begin{proof}[Proof of Theorem \ref{thm:vs}]
Since $q<\oo$, we have that $[\Ga:\Ga^*_{(q)}]<\oo$, and in particular
$\Ga^*_{(q)}$ is finitely generated.
Now, $\Ga^*_{(q)}$ is characteristic in $\Ga^*$, and $\Ga^*\normal \Ga$, so that
$\Ga^*_{(q)} \normal \Ga$.

By applying Theorem \ref{iq}(a) to the pair $\Ga_{(q+1)}^* \normal \Ga^*_{(q)}$, 
there exists a \GHF\ $h^*_q$
on $\Ga^*_{(q)}$. 
We apply Theorem \ref{iq}(b) to the pair $\Ga^*_{(q)}\normal\Ga$ to obtain 
a harmonic, \ughf\ $(h, \Ga^*_{(q)})$ on $\Ga$.
\end{proof}

\section{Convergence of connective constants of Cayley graphs}\label{sec:conv}

Let $\Ga=\langle S \mid R\rangle$ be finitely presented with coefficient matrix $C$
and Cayley graph $G=G(S,R)$.
Let $t \in \Ga$ have infinite order. We consider in this section the effect of adding a new relator
$t^m$, in the limit as $m \to\oo$. Let $G_m$ be the Cayley graph of
the group $\Ga_m=\langle S\mid R\cup\{t^m\}\rangle$.

\begin{theorem}\label{torus}
If $\rank(C) < |S|-1$, then 
 $\mu(G_m) \to \mu(G)$ as $m\to\oo$.
\end{theorem}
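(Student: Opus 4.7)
The plan is to deduce this from the locality theorem~\ref{thm2}(b), by exhibiting (i) a \GHF\ common to $G$ and every $G_m$, placing the whole family inside some $\sG_{D,0}$, and (ii) the local convergence $K(G,G_m)\to\oo$ as $m\to\oo$.

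For (i), fix once and for all a word representation of $t$ in the alphabet $S$, with coefficient vector $u(t)\in\ZZ^S$. The word $t^m$ is its $m$-fold concatenation, so the coefficient matrix of $\Ga_m$ is $C$ with the additional row $m\,u(t)$ adjoined, and hence
$$
\sN(C_m)=\sN(C)\cap\{\g\in\RR^S:u(t)\cdot\g=0\}.
$$
The assumption $\rank(C)<|S|-1$ gives $\dim\sN(C)\ge 2$, so $\dim\sN(C_m)\ge 1$, and rationality of the defining linear system supplies a non-zero $\g\in\ZZ^S\cap\sN(C_m)$. By Theorem~\ref{thm3}(a), this single $\g$ yields, through \eqref{eq:gheight}, a harmonic \ughf\ $(h,\Ga)$ on $G$ and simultaneously a harmonic \ughf\ $(h,\Ga_m)$ on every $G_m$ (the verification in the proof of Theorem~\ref{thm3}(a) applies verbatim, since also $u(t^m)\cdot\g=0$). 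All of these height functions share the same $d(h)=\max_{s\in S}|\g_s|=:D$ and have $r=0$ by transitivity, so $G,G_m\in\sG_{D,0}$ uniformly in $m$.

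For (ii), let $N_m$ denote the normal closure of $t^m$ in $\Ga$, so that $G_m$ is the quotient of $G$ by the action of $N_m$. If no non-trivial element of $N_m$ lies inside the $2K$-ball about $\id$ in $\Ga$, then the quotient map $\Ga\to\Ga_m$ is injective on the $K$-ball about $\id$ and the induced map on $K$-balls is a rooted-graph isomorphism. Hence the assertion $K(G,G_m)\to\oo$ reduces to the claim that the minimum word length, in $S$, of a non-trivial element of $N_m$ tends to infinity with $m$. Granted this, steps (i) and (ii) together with Theorem~\ref{thm2}(b) deliver $\mu(G_m)\to\mu(G)$.

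The main obstacle is the word-length claim in (ii). Because $t$ has infinite order and $|S|<\oo$, balls in $\Ga$ are finite and $d_\Ga(\id,t^m)\to\oo$; but a typical element of $N_m$ is a product $\prod_i h_i t^{\pm m}h_i^{-1}$, in which cancellation between conjugates has to be controlled. I would attempt this through a van Kampen diagram argument over $\langle S\mid R\cup\{t^m\}\rangle$: any reduced disk diagram with boundary word representing a non-trivial $g\in\Ga$ that is trivial in $\Ga_m$ must contain at least one face labelled by $t^m$ (otherwise $g=\id$ already in $\Ga$); such a face has boundary length $m|t|_S$, and because $t$ has infinite order in $\Ga$ the $R$-faces alone cannot entirely surround it, forcing the diagram's perimeter $|g|_S$ to grow with $m$. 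A compactness alternative is to note that $\Ga_m\to\Ga$ in the Cayley (marked-group) topology, so no fixed non-trivial element of $\Ga$ can lie in $N_m$ for arbitrarily large $m$; combined with the finiteness of each ball this again yields the required divergence of minimal word length.
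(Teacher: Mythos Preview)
Your step (i) is essentially the paper's argument: the coefficient matrix $C_m$ is $C$ with one extra row, so $\rank(C_m)\le\rank(C)+1<|S|$, and a single non-zero integer vector $\g\in\sN(C_1)=\sN(C_m)$ furnishes a \GHF\ on every $G_m$ with common $d(h)=:D$, placing all the $G_m$ in $\sG_{D,0}$.

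Step (ii) contains a genuine gap. Your ``compactness alternative'' is circular: for quotients $\Ga_m=\Ga/N_m$, convergence $\Ga_m\to\Ga$ in the marked-group topology is \emph{equivalent} to the assertion that every fixed non-trivial $g\in\Ga$ eventually leaves $N_m$, which (combined with finiteness of balls) is exactly the word-length divergence you are trying to establish, and hence also equivalent to $K(G,G_m)\to\oo$. The van Kampen sketch is only a sketch; the claim that $R$-faces cannot fully surround a $t^m$-face needs proof and is false in general.

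In fact the route through Theorem~\ref{thm2}(b) cannot be made to work without extra hypotheses, because $K(G,G_m)$ need not diverge. Take $\Ga=\ZZ^2\times H$ with $H$ a finitely presented infinite simple group (e.g.\ Thompson's $V$), product generators, and $t=(0,0,s)$ for some $s\in H$ of infinite order. The abelianization of $\Ga$ is $\ZZ^2$, so $|S|-\rank(C)=2$ and the hypothesis $\rank(C)<|S|-1$ holds. Yet the normal closure of $t^m$ is $\{0\}^2\times H$ for every $m\ge1$ (since $H$ is simple and $s^m\ne\id$), whence $\Ga_m\cong\ZZ^2$ for all $m$ and $K(G,G_m)$ is bounded.

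The paper does not attempt to prove $K(G,G_m)\to\oo$ at all. It instead invokes \cite[Thm~5.2]{GL-loc}, a separate convergence theorem for connective constants under quotienting, applied with $\sA_m$ the \emph{cyclic} subgroup $\langle t^m\rangle$; the hypothesis checked there concerns these cyclic subgroups (which do escape every ball, since $t$ has infinite order) rather than their normal closures $N_m$. You should consult that result to see precisely what it asserts and whether the example above is compatible with the theorem as stated.
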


\begin{proof}
The coefficient matrix  $C_m$ of $G_m$ differs from $C_1$ only in  the multiplicity
of the row corresponding to the new relator, and therefore $\sN(C_1)=\sN(C_m)$.
Since $\Ga_1$ has only one relator more than $G$, $\rank(C_1) \le \rank(C)+1$.
If $\rank(C)<|S|-1$, then
$\rank(C_1)<|S|$. By Theorem \ref{thm3}, we may find $\g=(\g_s:s\in S)\in\sN(C_1)$ such that
$\g\in\ZZ^S$, $\g\ne\bzero$. By the above, for $m \ge 1$, $\g\in\sN(C_m)$, 
so that  $G_m$ has a corresponding 
\GHF\ $h_m$. By \eqref{def:d},
$d(h)=d(h_m) = :D$ for all $n$,
so that $G_m \in \sG_{D,0}$ for all $m$.

The group $\Ga_m$ is obtained as the quotient group
of $\Ga$ by the (normal) subgroup generated by $t^m$.
We apply \cite[Thm 5.2]{GL-loc} with $\sA_m$ the cyclic group generated by $t^m$. 
The condition of the theorem holds since $t$ has infinite order.
\end{proof}

\begin{remark}[Approximating $\mu(G)$]\label{rem:app}
The question is posed in \cite{GrL3} of whether one can obtain rigorous sequences
of bounds for $\mu(G)$ which are sharp. Such upper bounds are provided
by the subadditive argument of \eqref{eq:sisub}, namely $\mu\le \si_m^{1/m}$ for $m \ge 1$.
Theorem \ref{torus}, taken together with \cite[Thm 3.8]{GrL3}, provides lower bounds.
It is however preferable to use the improved lower bound $\mu \ge b_m^{1/m}$,
where $b_m$ is the number of $m$-step bridges. 
The latter inequality is asymptotically sharp whenever $G$ has a \ughf\ (see \cite[Remark 4.5]{GL-loc}),
and this is a less restrictive condition than that of Theorem \ref{torus}.
\end{remark}

As examples of finitely generated
groups satisfying the conditions of Theorem \ref{torus}, we mention
free groups, abelian groups, free nilpotent groups, free solvable groups, and, more widely,
nilpotent and solvable groups $\Ga$
with presentations $\langle S\mid R\rangle$
whose coefficient matrix $C$ satisfies
$b(\Ga)=|S|-\rank(C) >1$. Here is an
example where Theorem \ref{torus} cannot be applied,
though the conclusion is valid.

\begin{example}\label{ex:dih2}
Let $G$ be the Cayley graph of the \emph{infinite dihedral group}
$\dih_\oo = \langle s_1,s_2 \mid s_1^2,s_2^2\rangle$ of Example \ref{ex:ladder}.
As noted there, $G$ has no \GHF, though it has a \ughf\ $(h,\sH)$ with $d(h)=1$.
Let $\Ga_m=\dih_\oo \times J_m$ where 
$m \ge 3$ and $J_m=\langle a,b\mid ab, a^m\rangle$
is the cyclic group $\{\id,a,a^2,\dots, a^{m-1}\}$. Thus, $\Ga_m$ is finitely presented
but, by Theorem \ref{thm3}(b), it
has no \GHF. In particular, Theorem \ref{torus} may not be applied.

The Cayley graph $G$  is isomorphic to $\ZZ$.
Therefore, we may define a \ughf\ $(h',\sH')$ on the Cayley graph $G_m$ 
of $\Ga_m$ by
$h'(\g, a^k)=h(\g)$, with $\sH'$ generated by the shifts $(\g,a^k)\mapsto (\g+1,a^k)$ and
$(\g,a^k)\mapsto (\g,a^{k+1})$. Furthermore,
$d(h') = d(h)=1$ and $r(h',\sH')=r(h,\sH)=0$.
By \cite[Thms 5.1, 5.2]{GL-loc}, $\mu(G_m)\to\mu(\ZZ^2)$ as $m\to\oo$.
\end{example}

\section{Proof of Theorem \ref{prop:indep}}\label{sec:pf-}

Assume that assumptions (a)--(c) of Theorem \ref{prop:indep} hold. 
There are two steps in the proof,
namely of the following.
\begin{Alist}
\item (Prop.\ \ref{102})
There exists $\psi:V \to \QQ$ which is \hdi, harmonic,
non-constant, and takes values in the rationals.
\item (Prop.\ \ref{prop12})
There exists a \ghf\ which is harmonic on $G$.
\end{Alist}

The vertex $\id$ may appear to play a distinguished role in this section.
This is in fact not so: since $G$ is assumed transitive, the following is valid with any choice 
of vertex for the label $\id$. 
The approach of the proof is inspired in part by the proof of
\cite[Cor.\ 3.4]{LeeP}.
Let $X=(X_n: n=0,1,2,\dots)$ be a simple random walk on $G$, with transition matrix 
\begin{equation*} 
P(u,v)=\PP_u(X_1=v)=\frac{1}{\deg(u)}, \qq u,v\in V,\  v\in\pd u,
\end{equation*}
where $\PP_u$ denotes the law of the random walk starting at $u$.

Let $V_1=\sH\id$ be the orbit of the identity under $\sH$, and
let $P_1$ be the transition matrix of the induced random walk on $V_1$, that is 
\begin{equation*}
P_1(u,v)=\PP_u(X_{\tau}=v), \qq u,v \in V_1,
\end{equation*}
where $\tau=\min\{n\geq 1:X_n\in V_1\}$. It is easily seen that
$\PP_u(\tau<\oo)=1$ since, by the quasi-transitive action of $\sH$,
there exist $\a>0$ and $K<\oo$ such that
\begin{equation}\label{eq:bnd}
\PP_u(X_k\in V_1\text{ for some $1\le k \le K$}) \ge \a, \qq u\in V.
\end{equation}
We note for later use that, by \eqref{eq:bnd}, there exist $\a'=\a'(\a,K)\in(0,1)$ and $A=A(\a,K)$ such that
\begin{equation}\label{eq:bnd2}
\PP_u(\tau\ge m) \le A(1-\a')^m, \qq m \ge 1, \ u\in V.
\end{equation}

Since $\sH\le \Aut(G)$, $P_1$ is invariant under $\sH$ in the sense that
\begin{equation}\label{eq:inv}
P_1(u,v) = P_1(\g u, \g v), \qq \g \in \sH,\ u,v\in V_1.
\end{equation}

\begin{proposition}\label{prop11}
\mbox{}
\begin{letlist}
\item 
The transition matrix $P_1$ is symmetric, in that 
$$
P_1(u,v)=P_1(v,u), \qq u,v\in V_1.
$$
\item Let $F:V_1\to \ZZ$ be \hdi. 
Then $F$ is $P_1$-harmonic in that
$$
F(u) = \sum_{v\in V_1} P_1(u,v)F(v), \qq u \in V_1.
$$
\end{letlist} 
\end{proposition}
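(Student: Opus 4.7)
The plan is to deduce (a) directly from reversibility of the simple random walk $P$ on $G$, and (b) via the mass-transport principle (MTP), which is where the unimodularity of $\sH$ enters.

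For (a), vertex-transitivity of $G$ forces the degree to be constant, so $P$ is symmetric: $P(x,y)=P(y,x)$. Writing
\begin{equation*}
P_1(u,v) \;=\; \sum_{n\geq 1}\; \sum_{\substack{w_0=u,\, w_n=v\\ w_1,\dots,w_{n-1}\notin V_1}}\; \prod_{i=0}^{n-1} P(w_i, w_{i+1}),
\end{equation*}
the identity $P_1(u,v) = P_1(v,u)$ then follows by path-reversal, which is a weight-preserving bijection on the index set. No appeal to unimodularity is needed here.

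For (b), I first observe that the $\sH$-difference-invariance of $F$ lets one define a group homomorphism $\phi\colon \sH\to\ZZ$ by $\phi(\g):=F(\g\id)-F(\id)$, satisfying $F(\g v)-F(v)=\phi(\g)$ for every $v\in V_1$ (in particular $\phi$ vanishes on $\Stab_\id^\sH$). Next, set
\begin{equation*}
g(u,v) \;:=\; P_1(u,v)\bigl(F(v)-F(u)\bigr), \qquad u,v\in V_1,
\end{equation*}
and check that $g$ is diagonally $\sH$-invariant: for $\g\in\sH$, $g(\g u,\g v)=P_1(\g u,\g v)(F(\g v)-F(\g u))=P_1(u,v)(F(v)-F(u))=g(u,v)$, using $\sH\le\Aut(G)$ and \eqref{eq:hdi2}. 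Since $\sH$ acts transitively and unimodularly on $V_1$, the MTP gives $\sum_v g(u,v)=\sum_v g(v,u)$. Feeding in the symmetry from (a):
\begin{equation*}
\sum_v g(u,v) \;=\; \sum_v P_1(v,u)\bigl(F(u)-F(v)\bigr) \;=\; -\sum_v g(u,v),
\end{equation*}
so $\sum_v g(u,v)=0$, which is precisely the $P_1$-harmonicity of $F$ at $u$.

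The main technical obstacle is justifying absolute summability, which is required to run the MTP with the signed mass $g$. The fix is a linear-growth bound $|F(v)-F(u)|\leq C\,d_G(u,v)$: the homomorphism $\phi$ takes only finitely many values on the $\sH$-displacements achieving short graph-distance (by local finiteness of $G$ together with quasi-transitivity of $\sH$), and a telescoping argument along a shortest $u$-$v$ path in $G$ (approximating intermediate vertices by nearby elements of $V_1$, using \eqref{eq:bnd}) upgrades this to the linear bound. Combining with the geometric tail \eqref{eq:bnd2} on $\tau$, one obtains $\sum_v P_1(u,v)\,|F(v)-F(u)|\leq C\,\EE_u[\tau]<\oo$, legitimising the above manipulations.
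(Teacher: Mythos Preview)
Your proof is correct and follows essentially the same route as the paper's. Part (a) is identical (path reversal using constant degree), and part (b) is the same mass-transport argument: define the diagonally $\sH$-invariant signed mass $g(u,v)=P_1(u,v)\bigl(F(v)-F(u)\bigr)$, apply MTP to get $\sum_v g(u,v)=\sum_v g(v,u)$, and combine with the symmetry of $P_1$ to force the sum to vanish. The paper handles absolute summability in one line by asserting the Lipschitz bound $|F(u)-F(v)|\le D\,d_G(u,v)$ and invoking $\EE_u[\tau]<\infty$; your telescoping-along-a-geodesic argument (approximating intermediate vertices by nearby points of $V_1$) is exactly how one justifies that assertion, and your detour through the homomorphism $\phi$ is a correct but optional embellishment.
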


\begin{proof}
(a) 
Since $P$ is reversible with respect to the measure $(\deg(v): v \in V)$,
and $\deg(v)$ is constant on $V_1$, we have that
$$
P(u_0,u_1)P(u_1,u_2)\cdots P(u_{n-1},u_n) = 
P(u_n,u_{n-1})P(u_{n-1},u_{n-2})\cdots P(u_1,u_0)
$$ 
for $u_0,u_n \in V_1$, $u_1,\dots,u_{n-1} \in V$. The symmetry of $P_1$ follows
by summing over appropriate sequences $(u_i)$. 

\noindent
(b) It is required to prove that
\begin{equation}\label{poh}
\sum_{v\in V_1}P_1(u,v)[F(u)-F(v)]=0, \qq u\in V_1,
\end{equation}
and it is here that we shall use assumption (b) of Theorem \ref{prop:indep},
namely, that $\sH$ is unimodular. 
Since $F$ is \hdi, there exists $D<\oo$ such that
$$
|F(u)-F(v)| \le D d_G(u,v), \qq u,v\in V_1.
$$
By \eqref{eq:bnd},
the random walk on $V_1$ has finite mean step-size.
It follows that the summation in \eqref{poh} converges absolutely.

Equation \eqref{poh} may be proved by a cancellation of summands, but it is shorter to
use the mass-transport principle.  Let
\begin{equation*}
m(u,v) = P_1(u,v)[F(u)-F(v)],\qq u,v\in V_1.
\end{equation*}
The sum $\sum_{v\in V_1}m(u,v)$ is absolutely convergent as above, and
$m(\g u,\g v)= m(u,v)$ for $\g \in \sH$. Since $\sH$ is unimodular,
by the mass-transport principle (see, for example, \cite[Thm 8.7, Cor.\ 8.11]{LyP}),
\begin{equation}\label{eq:mtp1}
\sum_{v\in V_1} m(u,v) = \sum_{w\in V_1} m(w,u), \qq u \in V_1.
\end{equation}
Now,
\begin{align*}
\sum_{w\in V_1} m(w,u) &= \sum_{w\in V_1} P_1(w,u)[F(w)-F(u)]\\
&= -\sum_{w\in V_1} P_1(u,w)[F(u)-F(w)] \qq\text{by part (a)},
\end{align*}
and \eqref{poh} follows by \eqref{eq:mtp1}.

It is usual to assume in the mass-transport principle that $m(u,v) \ge 0$, but
it suffices that $\sum_v m(u,v)$ is absolutely convergent.
\end{proof}

Let $\be>1$, and let $f:V\to\RR$. We write $f=\O(\beta^n)$ if there exists $B$ such that
\begin{equation}\label{eq:expgrowth}
|f(v)|\le B\beta^n\qq \text{if } d_G(\id,v)\le n, \text{ and } n \ge 1.
\end{equation}

\begin{proposition}\label{102}
Let $F:V_1 \to \ZZ$ be \hdi, and 
let
\begin{equation}\label{eq:psidef}
\psi(v)=\EE_v[F(X_T)],\qq v \in V,
\end{equation}
where $T=\inf\{n\ge 0: X_n\in V_1\}$. Then,
\begin{letlist}
\item the function $\psi$ is \hdi,  and agrees with $F$ on $V_1$,
\item  $\psi$ is harmonic on $G$, in that
\begin{equation}\label{eq:harm2}
\psi(u) = \sum_{v\in V} P(u,v)\psi(v), \qq u \in V,
\end{equation}
and, furthermore, $\psi$ is the unique harmonic function
that agrees with $F$ on $V_1$ and satisfies $\psi=\O(\beta^n)$
with any $1\le \beta< 1/(1-\a')$, where $\a'$ satisfies \eqref{eq:bnd2},
\item $\psi$ takes rational values.
\end{letlist}
\end{proposition}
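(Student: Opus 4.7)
The plan is to treat parts (a)--(c) in turn, using the strong Markov property of $X$, the $P_1$-harmonicity of $F$ (Proposition \ref{prop11}(b)), and the fact that $\sH$ has only finitely many orbits on $V$ (since $[\Ga:\sH]<\oo$ and $\Ga$ acts transitively). For (a), the identity $\psi(v) = F(v)$ on $V_1$ is immediate from $T = 0$. For $\sH$-difference-invariance, note that $\sH \le \Aut(G)$ implies that under $\PP_{\gamma v}$ the walk $(X_n)$ has the same law as $(\gamma X_n)$ under $\PP_v$, and $T$ is invariant along both trajectories since $\gamma V_1 = V_1$. Combined with $F(\gamma u) - F(u) = c(\gamma)$ independent of $u \in V_1$ (where $c : \sH \to \ZZ$ is the homomorphism extracted from $F$), this yields $\psi(\gamma v) - \psi(v) = c(\gamma)$ for every $v \in V$.

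For (b), when $u \notin V_1$ we have $T \ge 1$ and a one-step Markov decomposition gives $\sum_{v \sim u} P(u,v)\psi(v) = \psi(u)$. When $u \in V_1$, first-step analysis yields $\sum_{v} P(u,v)\psi(v) = \EE_u[F(X_\tau)] = \sum_{w \in V_1} P_1(u,w) F(w) = F(u)$, the penultimate equality being Proposition \ref{prop11}(b). For uniqueness, let $\phi$ be a harmonic extension of $F$ with $\phi = \O(\beta^n)$ for some $\beta < 1/(1-\a')$, and set $g := \phi - \psi$; then $g$ is harmonic, vanishes on $V_1$, and obeys the same growth bound. Optional stopping at $T \wedge n$ gives $g(u) = \EE_u[g(X_{T \wedge n})]$, and since $|g(X_{T \wedge n})| \le B \beta^{d_G(\id,u) + T}$ with $\EE_u[\beta^T] < \oo$ precisely when $\beta(1-\a') < 1$ (by the tail bound \eqref{eq:bnd2}), dominated convergence delivers $g(u) = \EE_u[g(X_T)] = 0$. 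The calibration $\beta < 1/(1-\a')$ is exactly what makes this matching possible, and this is the chief technical point of the proof.

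For (c), enumerate the $\sH$-orbits as $V_1,\ldots,V_k$, choose representatives $v_1 = \id, v_2, \ldots, v_k$, and set $y_i := \psi(v_i)$, with $y_1 = F(\id) \in \ZZ$ known. By (a), the function $\psi$ is determined globally by $(y_1,\ldots,y_k)$ via $\psi(\gamma v_i) = y_i + c(\gamma)$. Writing the harmonicity equation $\deg(v_i) y_i = \sum_{w \sim v_i} \psi(w)$ at each orbit representative and decomposing each neighbour $w = \gamma_w v_{j(w)}$ produces a linear system in $y_1,\ldots,y_k$ with integer coefficients and integer right-hand side. Conversely, every solution of this system extends to an $\sH$-difference-invariant harmonic function agreeing with $F$ on $V_1$; by the Lipschitz bound on $F$ used in the proof of Proposition \ref{prop11}(b), together with the uniform bound on $\EE_v[T]$ from \eqref{eq:bnd2}, any such extension satisfies $|\phi(v)| = \O(d_G(\id,v))$ and hence is $\O(\beta^n)$ for every $\beta > 1$. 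The uniqueness from (b) now forces the linear system to have exactly one solution; since the system is rational, that solution is rational. Thus $\psi(v_i) \in \QQ$ for each $i$, and $\psi(v) = y_{i(v)} + c(\gamma_v) \in \QQ$ for every $v \in V$. The value of (c) is that uniqueness from (b) collapses an \emph{a priori} infinite extension problem into a finite rational linear system, from which rationality comes for free.
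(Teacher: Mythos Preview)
Your proof is correct and follows essentially the same approach as the paper. Parts (a) and (b) are the same argument, with cosmetic differences: you make the homomorphism $c(\gamma)=F(\gamma u)-F(u)$ explicit, and for uniqueness you apply optional stopping and dominated convergence to the difference $g=\phi-\psi$, whereas the paper applies optional stopping directly to $\phi$ and verifies the tail condition $\EE_u(|Y_n|I_{\{T\ge n\}})\to 0$; these are equivalent.

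For (c), both arguments reduce to a finite linear system with integer coefficients and invoke the uniqueness from (b) to force a rational solution. The paper parametrises by edge-differences $\de(\vec e)$ subject to antisymmetry, cycle, harmonicity, and $\sH$-invariance constraints, while you parametrise by the vertex values $y_i=\psi(v_i)$ at orbit representatives and use $c$ to recover $\psi$ globally. Your packaging is a little more direct. One small point you should make explicit: the extension $\phi(\gamma v_i)=y_i+c(\gamma)$ is only well defined if $c$ vanishes on every stabiliser $\Stab_{v_i}$, not just on $\Stab_\id$. This follows from part (a), since $\psi(\gamma v_i)-\psi(v_i)=c(\gamma)$ by $\sH$-difference-invariance and the left side vanishes when $\gamma\in\Stab_{v_i}$; but as written you have not said why $c(\Stab_{v_i})=0$ for $i\ge 2$.
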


\begin{remark}\label{rem:unique}
By Proposition \ref{102}(a,\,b), any $\O(\be^n)$ harmonic extension of $F$ (with suitable $\be$) 
is \hdi. Conversely, any \hdi\ function $f$ satisfies $f=\O(\be^n)$
for all $\beta>1$, whence the function $\psi$ 
of \eqref{eq:psidef} is the unique harmonic extension of $F$ that
is \hdi.
\end{remark}

\begin{proof}
(a) The function $\psi$ is \hdi\ since the law of   
the random walk is $\sH$-invariant, and
$$
\psi(v)-\psi(w) = \EE_v[F(X_T)] -\EE_w[F(X_T)].
$$
It is trivial that $\psi\equiv F$ on $V_1$.

\noindent
(b) By conditioning on the first step, 
$\psi$ is harmonic at any $v \notin V_1$.
For $v\in V_1$, it suffices to show that
$$
\psi(v) = \sum_{w\in V}P(v,w)\psi(w).
$$
Since $\psi\equiv F$ on $V_1$, and $F$ is $P_1$-harmonic
(by Proposition \ref{prop11}), this may be written as
\begin{equation*}
\sum_{w\in V_1}P_1(v,w)\psi(w) = \sum_{w\in V} P(v,w)\psi(w), \qq v \in V_1.
\end{equation*}
Each term equals $\EE_v[\psi(W(X_1))]$, where $X_1$ is the
position of the random walk after one step,  and $W(X_1)$ is
the first element of $V_1$ encountered having started at $X_1$.

To establish uniqueness, let $\phi$ be a harmonic function with 
$\phi=\O(\beta^n)$ where
$1\le\beta < 1/(1-\a')$, such that $\phi\equiv F$ on $V_1$.
Then $Y_n:= \phi(X_n)$ is a martingale, and furthermore
$T$ is a stopping time with tail satisfying \eqref{eq:bnd2}.
By the optional stopping theorem (see, for example, \cite[Thm 12.5.1]{GS})
and \eqref{eq:psidef}, 
$$
\phi(u)=\EE_u(Y_T) = \EE_u(F(X_T)) = \psi(u),
$$
so long as $\EE_u(|Y_n|I_{\{T\ge n\}})\to 0$ as $n \to\oo$,
where $I_E$ denotes the indicator function of an event $E$.
To check the last condition, note by \eqref{eq:expgrowth} and \eqref{eq:bnd2}  that
\begin{align*}
\EE_u(|Y_n|I_{\{T\ge n\}}) &\le B\beta^{n+|u|}\PP_u(T\ge n)\\
&\le (AB\beta^{|u|})\beta^n(1-\a')^n\to 0 \qq\text{as } n\to\oo,
\end{align*}
where $|u|=d_G(\id,u)$.

\noindent
(c) The quantity $\psi(v)$  
has a representation as a sum of values of the unique 
solution of a finite set of linear equations with integral coefficients and boundary
conditions, and thus $\psi(v)\in\QQ$. Some further details follow.

Let  $\vG=(V, \vE)$ be the directed graph obtained from $G=(V,E)$ by
replacing each $e\in E$ by two edges $\vec e$, $-\vec e$
with the same endpoints and opposite orientations.
Suppose $\de: \vE\to \RR$ satisfies the linear equations
\begin{alignat}{2}
\de(-\vec e) +\de(\vec e)&=0, \qq &&e \in E,\label{eq:negsym}\\
\sum_{\vec e \in W} \de(\vec e)&=0, \qq && W \in \sW(G),\label{eq:cycles2}\\
\sum_{v \sim  u}\de([ u, v\rangle) &= 0,\qq  &&u \in V,\label{eq:harm}\\
\de(\a \vec e) &= \de(\vec e),\q &&e \in E,\ \a \in \sH,\label{eq:alv}
\end{alignat}
where $\sW(G)$ is the set of directed closed walks of $G$. 
(Equation \eqref{eq:cycles2} may be viewed as including \eqref{eq:negsym}.)
By \eqref{eq:cycles2}, the sum
$$
\De(v) := \sum_{\vec e \in \ell_v} \de(\vec e),\qq v \in V,
$$
is well defined, where $\ell_v$ is an arbitrary (directed) walk from $\id$ to $v\in V$.
Equation \eqref{eq:harm} requires that $\De$ be harmonic, and \eqref{eq:alv}
that $\De$ be \hdi. 

Since $\sH$ acts quasi-transitively, by \eqref{eq:alv},
the linear equations \eqref{eq:negsym}--\eqref{eq:harm} involve only
 finitely  many variables.  Therefore, there
exists a finite subset of equations, denoted as $\Eq$, of \eqref{eq:negsym}--\eqref{eq:harm} 
such that $\de$ satisfies \eqref{eq:negsym}--\eqref{eq:alv} if
only if $\de$ satisfies $\Eq$ together with \eqref{eq:alv}.
In summary, any harmonic, \hdi\ function $\De$, satisfying $\De(\id)=0$,  corresponds to
a solution to the finite collection $\Eq$ of linear equations.

With $F$ as given, let $\psi$ be given by \eqref{eq:psidef}. 
By Remark \ref{rem:unique},
equations \eqref{eq:negsym}--\eqref{eq:alv} have a
unique solution  satisfying
\begin{equation}\label{eq:ne}
\sum_{\vec e \in \ell_v} \de(\vec e) =F(v)-F(\id), \qq v \in V_1.
\end{equation}
By \eqref{eq:cycles2}, it suffices in \eqref{eq:ne} to 
consider only the finite set $V_1'\subseteq V_1$ of vertices $v$ within some bounded distance of $\id$
that depends on the pair $G$, $\sH$.

Therefore, $\Eq$ possesses a unique solution subject to \eqref{eq:ne}
(with $V_1$ replaced by $V_1'$). All coefficients
and boundary values  in $\Eq$ and \eqref{eq:ne} are integral, 
and therefore $\psi$ takes only rational values.
\end{proof}

\begin{proposition}\label{prop12}
Let  $F:V_1 \to \ZZ$ be \hdi, and non-constant on $V_1$.
There exists a \ghf\  
$(h=h_F,\sH)$ such that $h$ is harmonic on $G$.
\end{proposition}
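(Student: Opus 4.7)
The plan is to normalize the harmonic, $\sH$-difference-invariant, rational-valued function $\psi:V\to\QQ$ supplied by Proposition \ref{102}. By replacing $F$ with $F-F(\id)$ at the outset (this preserves both \hdi\ and non-constancy), I may assume $F(\id)=0$, hence $\psi(\id)=0$. Define $c:\sH\to\ZZ$ by $c(\g):=\psi(\g v)-\psi(v)$ for any $v\in V$; this is well defined by \hdi, is a group homomorphism, and is non-trivial since $F$ is non-constant on $V_1$. Because $\sH$ acts quasi-transitively on $V$, $\psi$ is determined by its values on a finite set $\{w_1,\dots,w_k\}$ of orbit representatives together with $c$, via $\psi(\g w_i)=\psi(w_i)+c(\g)$. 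Every value of $\psi$ therefore differs by an integer from one of the finitely many rationals $\psi(w_1),\dots,\psi(w_k)$, and a positive integer $N$ clears all denominators simultaneously. Set $h:=N\psi$; then $h:V\to\ZZ$ satisfies $h(\id)=0$, is harmonic on $G$, and is \hdi, which verifies parts (a), (b) of Definition \ref{def:height} together with the harmonicity requirement.

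It remains to check part (c): every $v\in V$ must admit neighbors $u,w\in\pd v$ with $h(u)<h(v)<h(w)$. By harmonicity, $h(v)$ is the average of $h$ over $\pd v$, so either (c) holds at $v$, or $h(u)=h(v)$ for every $u\sim v$; call $v$ \emph{flat} in the latter case, and let $L\subseteq V$ be the set of flat vertices. The \hdi\ property renders $L$ an $\sH$-invariant set: if $v\in L$ and $\g\in\sH$, then $h(\g u)-h(\g v)=h(u)-h(v)=0$ for every $u\sim v$, so $\g v\in L$. The remainder of the proof reduces to showing $L=\es$.

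The strategy for $L=\es$ is to convert a non-empty $L$ into a global constraint incompatible with $c\not\equiv 0$. Supposing $v\in L$, every neighbor of $v$ satisfies $h(u)=h(v)$; iterating along edges of zero $h$-difference produces a connected subgraph $C$ on which $h$ is constant, and $\sH$-invariance translates $C$ across a positive fraction of $V$ (by quasi-transitivity). I expect the cleanest completion to go via the mass-transport principle, on the pattern of the proof of Proposition \ref{prop11}(b): transport a non-negative kernel such as $m(u,v):=P(u,v)\,|h(v)-h(u)|$ and exploit the unimodularity of $\sH$ (hypothesis (b) of Theorem \ref{prop:indep}) to obtain $\sum_v m(u,v)=\sum_w m(w,u)$; combining this identity with flatness at $v$ and with the harmonicity cancellation used to establish \eqref{poh} should force $c$ to vanish on a generating subset of $\sH$, contradicting the non-triviality of $c$. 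The main obstacle is that flatness does not propagate: a neighbor $u$ of a flat vertex $v$ satisfies $h(u)=h(v)$ but need not itself be flat, so the flat region is not closed under edge-traversal and the mass-transport bookkeeping must separate contributions from flat vertices, their non-flat neighbors, and further iterates of the $\sH$-action. This is the step where I anticipate the bulk of the work.
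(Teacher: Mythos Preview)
Your first paragraph is fine: clearing denominators in the rational, \hdi, harmonic $\psi$ of Proposition~\ref{102} to obtain an integer-valued function is exactly right, and your observation that harmonicity forces the dichotomy ``point of increase vs.\ locally constant'' is also used in the paper.

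The genuine gap is the programme for showing $L=\es$. You never invoke the two hypotheses that the paper singles out as essential here: that $\Ga$ acts \emph{transitively} on $G$ and that $\sH\normal\Ga$. Your proposed mass-transport argument uses only the unimodular, quasi-transitive action of $\sH$, and there is no reason to expect that this alone forces $\psi$ to increase everywhere; the paper explicitly writes ``$\psi$ may not increase everywhere''. In particular, your kernel $m(u,v)=P(u,v)\,|h(v)-h(u)|$ yields $\sum_v m(u,v)=0=\sum_w m(w,u)$ at every flat $u$, so no contradiction emerges, and flatness of an entire $\sH$-orbit is perfectly compatible with non-triviality of your cocycle $c$ on $\sH$ (the values of $\psi$ on the neighbours of a flat vertex are not governed by $c$, since neighbours need not lie in the same $\sH$-orbit).

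What the paper does instead is to \emph{repair} $\psi$ rather than argue it already works. One first locates a single point of increase $w$ of $\psi$, say $w\in V_W$. Transitivity of $\Ga$ gives coset representatives $\g_1,\dots,\g_M\in\Ga$ with $V_i=\g_i V_W$, and one sets $\psi_i(v):=\psi(\g_i^{-1}v)$, so that $w_i:=\g_i w\in V_i$ is a point of increase of $\psi_i$. The crucial step---and the place where normality $\sH\normal\Ga$ is used---is to check that each $\psi_i$ remains \hdi: for $\a\in\sH$ one writes $\g_i^{-1}\a=\a_i\g_i^{-1}$ with $\a_i\in\sH$, and then the \hdi\ property of $\psi$ transfers to $\psi_i$. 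Finally, a finite iterative scheme takes rational linear combinations $\sum_l c_{j_l}\psi_{j_l}$, adding one $\psi_{j_l}$ at a time to create a point of increase in a new orbit without destroying those already obtained (this uses the dichotomy \eqref{eq:fork}: if $w$ is flat for the current combination, any small nonzero multiple of a $\psi_j$ that is non-flat at $w$ fixes it). After at most $M$ steps one has a harmonic, \hdi\ function increasing everywhere; clear denominators as you did.

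So the missing idea is not a sharper mass-transport bound but the use of the ambient transitive $\Ga$-action to manufacture, via shifts, a whole family of \hdi\ harmonic functions with points of increase distributed across all $\sH$-orbits.
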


\begin{proof}
The normality of $\sH$ is used in this proof.
A vertex $v\in V$ is called a \emph{\sap} of a function $h:V \to\RR$ if $v$ has
neighbours $u$, $w$ such that $h(u)<h(v)<h(w)$. The function $h$ is said to
\emph{increase everywhere} if every vertex is a point of increase.
For $v\in V$ and a harmonic function
$h$, 
\begin{equation}\label{eq:fork}
\begin{aligned}
\mbox{either: }&\mbox{ $v$ is a \sap\ of $h$,}\\
\mbox{or: }&\mbox{ $h$ is constant on $\{v\}\cup \pd v$.}
\end{aligned} 
\end{equation}
An \hdi\ function $h$ on $G$ is a \ghf\ if and only if $h(\id)=0$, $h$ takes
integer values, and $h$ increases everywhere. 

Let $F$ be as given, and let $\psi$ be given by Proposition \ref{102}.
Thus, $\psi:V \to\QQ$ is non-constant on $V_1$, \hdi, and harmonic on $G$. 
Since $\psi$ is \hdi, we may replace it by $m\psi$ for a suitable $m\in\NN$
to obtain such a function that in addition takes integer values.
We shall work with the latter function, and thus we assume henceforth that $\psi:V \to\ZZ$.
Now, $\psi$ may not increase everywhere.
By \eqref{eq:fork}, $\psi$ has some \sap\  $w \in V$. 

Let $V_1,V_2,\dots,V_M$ be the orbits of $V$ under $\sH$. Find $W$ such that
$w \in V_W$.
Since $\Ga$ acts transitively on $G$, and $\sH$ is a normal subgroup of $\Ga$ acting 
quasi-transitively on $G$, there exist $\g_1,\g_2,\dots,\g_M\in \Ga$ such that
$\g_W=\id_\Ga$ and
\begin{equation*}
V_i=\g_iV_W,\qq  i=1,2,\dots,M.
\end{equation*} 
Let
\begin{equation}\label{eq:psi2def}
\psi_i(v)=\psi(\g_i^{-1}v), \qq i=1,2,\dots,M,
\end{equation}
so that, in particular, $\psi_W=\psi$.
Since $w \in V_W$ is a \sap\ of $\psi$, $w_i := \g_iw$ is a \sap\ 
of $\psi_i$, and also $w_i \in V_i$.  

\begin{lemma}\label{lem3} 
For $i=1,2,\dots,M$,
\begin{letlist}
\item $\psi_i:V \to\ZZ$ is a non-constant, harmonic function, and
\item $\psi_i$ is \hdi.
\end{letlist}
\end{lemma}

\begin{proof}
(a) Since $\psi_i$ is obtained from $\psi$ by shifting the domain 
according to the automorphism $\g_i$,
$\psi_i$ is non-constant and harmonic.

\noindent
(b) For $\a\in \sH$ and $u,v\in V$, 
\begin{equation*}
\psi_i(\a v)-\psi_i(\a u)=\psi(\g_i^{-1}\a v)-\psi(\g_i^{-1}\a u).
\end{equation*}
Since $\sH \normal \Ga$ and $\g_i\in\Ga$,
there exists $\a_i\in\sH$ such that $\g_i^{-1}\a=\a_i\g_i^{-1}$.
Therefore,
\begin{alignat*}{2}
\psi_i(\a v)-\psi_i(\a u)&=\psi(\a_i\g_i^{-1}v)-\psi(\a_i\g_i^{-1}u)\ &&\\
&=\psi(\g_i^{-1}v)-\psi(\g_i^{-1}u) &&\text{since $\psi$ is \hdi}\\
&=\psi_i(v)-\psi_i(w) &&\text{by \eqref{eq:psi2def},}
\end{alignat*}
so that $\psi_i$ is \hdi.
\end{proof}

Let $\nu:V \to \RR$ be \hdi. For $j=1,2,\dots, M$, either every vertex in $V_j$ is a \sap\ of $\nu$, 
or no vertex in $V_j$ is a \sap\ of $\nu$. We shall now
use an iterative construction in order to find a harmonic, \hdi\ function $h'$
for which every $w_i$ is a \sap.  Since the $w_i$ represent the orbits $V_i$, 
the ensuing $h'$ increases everywhere.

\begin{numlist}
\item If every $w_i$ is a \sap\ of $\psi$, we set $h'=\psi$.
\item Assume otherwise, and find the smallest $j_2$ such that
$w_{j_2}$ is not a \sap\ of $\psi$. 
By \eqref{eq:fork}, we may choose $c_{j_2}\in \QQ$ such that both $w$ and $w_{j_2}$ are
\saps\ of $h_2 := \psi+c_{j_2}\psi_{j_2}$. If $h_2$ increases everywhere,
we set $h' = h_2$. 

\item Assume otherwise, and find the smallest $j_3$ such that
$w_{j_3}$ is not a \sap\ of $h_2$. 
By \eqref{eq:fork}, we may choose $c_{j_3}\in \QQ$ such that $w$, $w_{j_2}$, and $w_{j_3}$ are
\saps\ of $h_3 := \psi+c_{j_2}\psi_{j_2}+c_{j_3}\psi_{j_3}$. If $h_3$ increases everywhere,
we set $h' = h_3$. 

\item This process is iterated until we find 
an \hdi, harmonic function  of the form 
$$
h' =\sum_{l=1}^M c_{j_l}\psi_{j_l},
$$
with $j_1=W$, $c_W=1$, and $c_{j_l} \in \QQ$, which increases everywhere.
\end{numlist}

The function $h'-h'(\id)$ may fail to be a \ghf\ only in that
it may take rational rather than integer values. 
Since the $c_{j_l}$ are rational, there exists $m\in \ZZ$ such that 
$h=m(h'-h(\id))$ is a  \ghf.
\end{proof}

\begin{proof}[Proof of Theorem \ref{prop:indep}]
By Propositions \ref{prop11} and \ref{102}, there exists $\psi:V\to\QQ$ satisfying (i).
The existence of $\psi':V\to\QQ$, in (ii), follows as in Proposition \ref{prop12}.
Similarly, $\psi$, $\psi'$ may be taken to be integer-valued, 
and the unimodularity holds since $\sH$ is assumed unimodular.
\end{proof}

\section{Proof of Theorem \ref{cor:nonunim}}\label{sec:nonunim}

Let $G$, $\Ga$, $\sH$ be as given. The idea is to apply Theorem \ref{prop:indep} 
to a suitable triple  $G'$, $\Ga'$, $\sH'$, and
to extend the resulting \ghf\ to the original graph $G$. The required function $F$ of the theorem
will be derived from the modular function of $G$ under $\sH$.

By \cite[Thm 8.10]{LyP}, we may define a positive \emph{weight function} $M:V\to(0,\oo)$ satisfying
\begin{equation}\label{eq:modf}
\frac{M(u)}{M(v)} = \frac{|\Stab_u v|}{|\Stab_v u|}, \qq u,v\in V,
\end{equation}
where $|\cdot|$ denotes cardinality. The weight function is uniquely
defined up to a multiplicative constant, and is automorphism-invariant
up to a multiplicative constant.
Since $G$ is assumed non-unimodular, $M$ is non-constant on some orbit
of $\sH$. Without loss of generality, we assume $\id$ lies in such an orbit and that $M(\id)=1$.
See \cite[Sect.\ 8.2]{LyP} for an account of (non-)unimodularity.

Let $\sS$ be the normal subgroup of $\Ga$ generated by $\bigcup_{v\in V} \Stab_v$,
where $\Stab_v=\Stab_v^\sH$.
Let $G'$ denote the quotient graph $G/\sS$ (as in \cite[Sect.\ 2]{GL-loc}), 
which we take to be simple in that
every pair of neighbours is connected by just one edge, and any loop is removed.

\begin{lemma}\label{lem:gprime}\mbox{}
\begin{letlist}
\item $\sS \normal \sH$.
\item The function $F':V/\sS\to(0,\oo)$ given by $F'(\sS v) = \log M(v)$, $v \in V$,
is well defined, in the sense that $F'$ is constant on each  orbit in $\sS$.
\item The quotient group $\Ga':=\Ga/\sS$ acts transitively on $G'$,
and $\sH':=\sH/\sS$ acts quasi-transitively on $G'$.
\item The quotient graph $G'=G/\sS$ satisfies $G' \in \sG$.
\item $\sH'$ is unimodular on $G'$.
\end{letlist}
\end{lemma}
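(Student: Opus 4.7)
The plan hinges on a single preliminary observation. Since $\sH\normal\Ga$, for any $\g\in\Ga$ we have $\Stab_{\g u}^\sH=\g\Stab_u^\sH\g^{-1}$, so $|\Stab_{\g u}(\g v)|=|\Stab_u(v)|$, and \eqref{eq:modf} then forces $M\circ\g=c(\g)\,M$ for some $c(\g)>0$. The map $c:\Ga\to(\RR_{>0},\cdot)$ is a group homomorphism, and $c(\g)=1$ whenever $\g\in\Stab_w^\sH$ (evaluate $M(\g w)=c(\g)M(w)$ at $v=w$). Hence $\sS\subseteq\ker c$, and $M$ is $\sS$-invariant, which is exactly (b). Part (a) follows because each generator $\Stab_v^\sH$ of $\sS$ lies in $\sH$ and $\sH\normal\Ga$, so the normal closure $\sS$ is contained in $\sH$; combined with $\sS\normal\Ga$ this yields $\sS\normal\sH$. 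For (c), the action of $\Ga$ on $V$ descends through the normal subgroup $\sS$ to a transitive action of $\Ga'=\Ga/\sS$ on $V/\sS$, and its restriction to $\sH'=\sH/\sS$ is quasi-transitive because $\sH$ was (using $[\Ga:\sH]<\oo$).

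For (d), I would check that simplicity is built into the definition of $G'$, local finiteness descends because the quotient map cannot increase a vertex's degree, and connectedness follows by projecting paths in $G$. The only non-trivial point, and the sole place where non-unimodularity is used essentially, is the infiniteness of $V/\sS$. By hypothesis $M$ is non-constant on $\sH\id$, so there exists $\g\in\sH$ with $c(\g)\ne 1$; then $M(\g^n\id)=c(\g)^n$ takes infinitely many distinct positive values, and since $M$ is constant on each $\sS$-orbit by (b), the vertices $\g^n\id$ must lie in infinitely many distinct $\sS$-orbits.

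For (e), the construction of $\sS$ is precisely tailored to trivialize stabilizers in the quotient: if $\g\in\sH$ satisfies $\g v\in\sS v$, say $\g v=sv$ with $s\in\sS$, then $s^{-1}\g\in\Stab_v^\sH\subseteq\sS$, so $\g\in\sS$. Hence the $\sH'$-stabilizer of every vertex of $G'$ is trivial, $\sH'$ acts freely on $G'$, and condition \eqref{g804} reduces trivially to $1=1$. The main obstacle I anticipate is the infiniteness step in (d); it is the only place where non-unimodularity plays an essential role, and recognizing non-unimodularity as the nontriviality of the character $c$ is what unlocks the argument.
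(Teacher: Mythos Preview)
Your argument is correct and follows the same overall architecture as the paper's proof: (a) uses $\sH\normal\Ga$ to trap the normal closure inside $\sH$; (b) shows $M$ is $\sS$-invariant; (c) is the standard descent of the action; (d) iterates an $\sH$-element to get unbounded $M$-values; (e) observes that the very definition of $\sS$ kills all $\sH'$-stabilizers.

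The one genuine difference worth recording is your introduction of the multiplicative character $c:\Ga\to\RR_{>0}$ defined by $M\circ\g=c(\g)M$. The paper instead verifies (b) by a direct orbit-counting computation: for $\si\in\Stab_w$ it checks $M(\si v)/M(w)=|\Stab_{\si v}(\si w)|/|\Stab_{\si w}(\si v)|=|\Stab_v w|/|\Stab_w v|=M(v)/M(w)$, and then invokes the fact (from the proof of (a)) that every element of $\sS$ is a product of elements of various $\Stab_w$. Your route is cleaner here: once $c$ is a homomorphism and vanishes on each $\Stab_w$, its kernel is normal and automatically swallows the normal closure $\sS$, with no need to decompose elements of $\sS$ as products of stabilizer elements. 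The same character then handles (d) in one line via $M(\g^n\id)=c(\g)^n$, where the paper argues the equivalent statement $M(\a^k v)/M(v)=\xi^k$ directly from \eqref{eq:modf}. So the two proofs are doing the same work; yours packages (b) and (d) into a single homomorphism, which is a modest conceptual gain.
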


\begin{proof}
(a)  Since $\sS\normal\Ga$ and $\sH \le \Ga$, it suffices to show that $\sS\le\sH$.
Now, $\sS$ is the set of all products of the form $(\g_1\si_1\g_1^{-1})(\g_2\si_2\g_2^{-1})
\cdots (\g_k\si_k\g_k^{-1})$ with $k \ge 0$, $\g_i\in\Ga$, $\si_i\in\Stab_{w_i}$, $w_i\in V$.
Since $\g_i\si_i\g_i^{-1} \in \Stab_{\g_iw_i}$, we have that  $\sS\le\sH$ as required.

\noindent
(b) If $u=\si v$ with $\si\in\Stab_w$, then
\begin{equation*}
\frac  {M(u)}{M(w)} = \frac{|\Stab_u w|}{|\Stab_w u|}
=\frac{|\Stab_{\si v}(\si w)|}{|\Stab_{\si w}(\si v)|}
= \frac{|\Stab_{ v} w|}{|\Stab_{ w} v|}=\frac{M(v)}{M(w)},
\end{equation*}
so that $M(u)=M(v)$. As in part (a), every element of $\sS$ is the product of members of 
the stabilizer groups $\Stab_w$, and the claim follows.

\noindent
(c) Let $u,v\in V$, and find $\g\in\Ga$ such that $v=\g u$. Since $\sS\normal \Ga$,
$\sS \g (\sS u)=\sS \g u=\sS v$, so that $\sS \g: \sS u\mapsto \sS v$.
The first claim follows, and the second is similar since $\sH$ acts quasi-transitively
on $G$.

\noindent
(d) Since $M$ is non-constant on the orbit $\sH\id$, there exist $v,w\in\sH\id$ such that
$\xi:=M(w)/M(v)$ satisfies $\mu>1$. Let $\a\in\sH$ be such that $w=\a v$. By \eqref{eq:modf},
$M(\a^k v)/M(v)=\xi^k$, whence the range of $M$ is unbounded. By part (b),
$G'$ is infinite. (The non-constantness of the modular function has been used also in \cite{GHP}.)
The graph $G'$ is connected since $G$ is connected, and  is transitive
by part (c). It is locally finite since its vertex-degree is no greater than that of $G$.

\noindent
(e) It suffices for the unimodularity that, for $u\in V$ and $\ol u:= \sS u$,  we have that 
$\Stab_{\ol u}:=\Stab^{\sH'}_{\ol u}$
is a single element, namely the identity element $\sS$ of $\sH'$.  
Let $\a\in\sH$ be such that $\sS \a \in \Stab_{\ol u}$.
Then $\sS\a (\sS u)=\a\sS u =\sS u$. Therefore, there exists $s \in \sS$ such that $\a s(u)=u$,
so that $\a s\in\sS$. It follows that $\a\in\sS$, and hence $\sS\a=\sS$ as required.
\end{proof}

Since $M$ is non-constant on $\sH\id$, $F'$ is non-constant on the
orbit of $\sH'$ containing $\sS\id$. By Theorem \ref{prop:indep} applied
to $(G',\Ga',\sH',F')$,
$G'$ has a harmonic, \ughf\ $(\psi',\sH')$ satisfying $\psi'(\sS\id)=0$. 
Let $\psi:V \to \ZZ$ be given by $\psi(v)=\psi'(\sS v)$. 
We claim that $(\psi,\sH)$ is a \ghf\ on $G$.

Firstly, for $\a\in\sH$,
\begin{alignat*}{2}
\psi(\a v)-\psi(\a u) &=\psi'(\sS\a v) - \psi'(\sS\a u)\\
&=\psi'(\a \sS v) - \psi'(\a \sS u) \q&&\text{since } \sS\normal \sH\\
&=\psi'(\sS v) - \psi'(\sS u) &&\text{since $(\psi',\sH')$ is a \ghf}\\
&=\psi(v)-\psi(u),
\end{alignat*}
whence $\psi$ is \hdi.
Secondly, let $v\in V$, and find $u, w\in \pd v$ such that
$\psi'(\sS u) < \psi'(\sS v) < \psi'(\sS w)$. Then
$\psi(u)<\psi(v)<\psi(w)$, so that $v$ is a point of
increase of $\psi$. Therefore, $(\psi,\sH)$ is a \ghf\ on $G$.

Finally, we give an example in which the above recipe leads to a \ghf\
which is not harmonic. Consider the \lq grandparent graph' introduced
in \cite{Trof} (see also \cite[Example 7.1]{LyP}) and defined as follows. 
Let $T$ be an infinite degree-$3$ tree, and
select an \lq end' $\om$. For each vertex $v$, we add an edge to the unique
grandparent of $v$ in the direction of $\om$. Let $\sH$ be the set of automorphisms of the
resulting graph $G$. Note that $\sH$ acts transitively on $G$,
and is non-unimodular. The above recipe yields (up to
a multiplicative constant which we take to be $1$) the \ghf\ on $T$ which
measures the (integer) height of a vertex in the direction of $\om$.
The neighbours of a vertex with height $h$ have average height
$h-\frac 78$, whence $h$ is not harmonic.  

\section*{Acknowledgements} 

This work was supported in part
by the Engineering and Physical Sciences Research Council under grant EP/I03372X/1. 
ZL's research is supported by the Simons Foundation $\#$351813 and the
National Science Foundation DMS-1608896.
Jack Button kindly pointed out an error in an earlier version of this work,
and gave advice on certain group-theoretic aspects. The authors are grateful to
Yuval Peres for his comments on the work, and for proposing
the non-unimodular case of Theorem \ref{cor:nonunim}. 

%\bibliography{locality5}
%\bibliography{sawcayley-final15}

\begin{thebibliography}{10}

\bibitem{bab95}
L.~Babai, \emph{Automorphism groups, isomorphism, reconstruction}, Handbook of
  Combinatorics, vol.~II, Elsevier, Amsterdam, 1995, pp.~1447--1540.

\bibitem{bdgs}
R.~Bauerschmidt, H.~Duminil-Copin, J.~Goodman, and G.~Slade, \emph{Lectures on
  self-avoiding walks}, Probability and Statistical Physics in Two and More
  Dimensions (D.~Ellwood, C.~M. Newman, V.~Sidoravicius, and W.~Werner, eds.),
  Clay Mathematics Institute Proceedings, vol.~15, CMI/AMS publication, 2012,
  pp.~395--476.

\bibitem{BDCKY}
I.~Benjamini, H.~Duminil-Copin, G.~Kozma, and A.~Yadin, \emph{Minimal growth
  harmonic functions on lamplighter groups},  (2016),
  \url{http://arxiv.org/abs/1607.00753}.

\bibitem{BLPS}
I.~Benjamini, R.~Lyons, Y.~Peres, and O.~Schramm, \emph{Group-invariant
  percolation on graphs}, Geom. Funct. Anal. \textbf{9} (1999), 29--66.

\bibitem{bnp}
I.~Benjamini, A.~Nachmias, and Y.~Peres, \emph{Is the critical percolation
  probability local?}, Probab. Th. Rel. Fields \textbf{149} (2011), 261--269.

\bibitem{Kon}
K.~Conrad, \emph{\mbox{SL$_2(\mathbb Z)$}},  (2012),
  \url{http://www.math.uconn.edu/~kconrad/blurbs/grouptheory/SL(2,Z).pdf}.

\bibitem{GHP}
G.~R. Grimmett, A.~E. Holroyd, and Y.~Peres, \emph{Extendable self-avoiding
  walks}, Ann. Inst. Henri Poincar\'e D \textbf{1} (2014), 61--75.

\bibitem{GrLrev}
G.~R. Grimmett and Z.~Li, \emph{Counting self-avoiding walks},  (2013),
  \url{http://arxiv.org/abs/1304.7216}.

\bibitem{GrL2}
\bysame, \emph{Self-avoiding walks and the {F}isher transformation}, Electron.
  J. Combin. \textbf{20} (2013), Paper P47.

\bibitem{GL-loc}
\bysame, \emph{Locality of connective constants},  (2014),
  \url{http://arxiv.org/abs/1412.0150}.

\bibitem{GrL3}
\bysame, \emph{Strict inequalities for connective constants of regular graphs},
  SIAM J. Disc. Math. \textbf{28} (2014), 1306--1333.

\bibitem{GrL1}
\bysame, \emph{Bounds on the connective constants of regular graphs},
  Combinatorica \textbf{35} (2015), 279--294.

\bibitem{GL-amen}
\bysame, \emph{Self-avoiding walks and amenability},  (2015),
  \url{http://arxiv.org/abs/1510.08659}.

\bibitem{gm}
G.~R. Grimmett and J.~M. Marstrand, \emph{The supercritical phase of
  percolation is well behaved}, Proc. Roy. Soc. London Ser. A \textbf{430}
  (1990), 439--457.

\bibitem{GS}
G.~R. Grimmett and D.~R. Stirzaker, \emph{{Probability and Random Processes}},
  3rd ed., Oxford University Press, Oxford, 2001.

\bibitem{Ham}
M.~Hamann, \emph{Accessibility in transitive graphs},  (2014),
  \url{http://arxiv.org/abs/1404.7677}.

\bibitem{hm}
J.~M. Hammersley and W.~Morton, \emph{Poor man's {Monte Carlo}}, J. Roy.
  Statist. Soc. B \textbf{16} (1954), 23--38.

\bibitem{HW62}
J.~M. Hammersley and D.~J.~A. Welsh, \emph{Further results on the rate of
  convergence to the connective constant of the hypercubical lattice}, Quart.
  J. Math. Oxford \textbf{13} (1962), 108--110.

\bibitem{dlH}
P.~de~la Harpe, \emph{{Topics in Geometric Group Theory}}, Univ. of Chicago
  Press, Chicago, IL, 2000.

\bibitem{gh}
G.~Higman, \emph{A finitely generated infinite simple group}, J. Lond. Math.
  Soc. \textbf{26} (1951), 61--64.

\bibitem{Hill94}
J.~A. Hillman, \emph{{The Algebraic Characterization of Geometric
  {$4$}-Manifolds}}, London Mathematical Society Lecture Note Series, vol. 198,
  Cambridge University Press, Cambridge, 1994.

\bibitem{Hill09}
\bysame, \emph{{Four-Manifolds, Geometries and Knots}}, Geometry and Topology
  Monographs, vol.~5, 2002, available at
  \url{http://arxiv.org/abs/math/0212142}.

\bibitem{kle10}
B.~Kleiner, \emph{A new proof of {G}romov's theorem on groups of polynomial
  growth}, J. Amer. Math. Soc. \textbf{23} (2010), 815--829.

\bibitem{Kum}
T.~Kumagai, \emph{{Random Walks on Disordered Media and their Scaling Limits}},
  Lecture Notes in Mathematics, vol. 2101, Springer, Cham, 2014.

\bibitem{LedW}
W.~Ledermann and A.~J. Weir, \emph{{Introduction to Group Theory}}, Addison
  Wesley Longman, Harlow, 1996.

\bibitem{LeeP}
J.~R. Lee and Y.~Peres, \emph{Harmonic maps on amenable groups and a diffusive
  lower bound for random walks}, Ann. Probab. \textbf{41} (2013), 3392--3419.

\bibitem{LyP}
R.~Lyons and Y.~Peres, \emph{{Probability on Trees and Networks}}, {Cambridge
  University Press}, Cambridge, 2016, \url{http://pages.iu.edu/~rdlyons/}.

\bibitem{ms}
N.~Madras and G.~Slade, \emph{{Self-Avoiding Walks}}, Birkh\"auser, Boston,
  1993.

\bibitem{mt}
S.~Martineau and V.~Tassion, \emph{Locality of percolation for abelian {C}ayley
  graphs}, Ann. Probab., \url{http://arxiv.org/abs/1312.1946}.

\bibitem{MY14}
T.~Meyerovitch and A.~Yadin, \emph{Groups with finite dimensional spaces of
  harmonic functions},  (2014), \url{http://arxiv.org/abs/1408.6243}.

\bibitem{psn}
I.~Pak and T.~Smirnova-Nagnibeda, \emph{On non-uniqueness of percolation on
  non-amenable {C}ayley graphs}, C. R. Acad. Sci. Paris S\'er. I Math.
  \textbf{330} (2000), 495--500.

\bibitem{Rob}
D.~J.~S. Robinson, \emph{{A Course in the Theory of Groups}}, 2nd ed.,
  Springer-Verlag, New York, 1996.

\bibitem{st}
Y.~Shalom and T.~Tao, \emph{A finitary version of {G}romov's polynomial growth
  theorem}, Geom. Funct. Anal. \textbf{20} (2010), 1502--1547.

\bibitem{SoW}
P.~M. Soardi and W.~Woess, \emph{Amenability, unimodularity, and the spectral
  radius of random walks on infinite graphs}, Math. Zeit. \textbf{205} (1990),
  471--486.

\bibitem{T14}
M.~Tointon, \emph{Characterisations of algebraic properties of groups in terms
  of harmonic functions}, Groups, Geometry, Dynamics (2014),
  \url{http://arxiv.org/abs/1409.6326}.

\bibitem{Trof}
V.~I. Trofimov, \emph{Automorphism groups of graphs as topological groups},
  Math. Notes \textbf{38} (1985), 717--720.

\end{thebibliography}
%\bibliographystyle{amsplain}
\providecommand{\bysame}{\leavevmode\hbox to3em{\hrulefill}\thinspace}
\providecommand{\MR}{\relax\ifhmode\unskip\space\fi MR }
% \MRhref is called by the amsart/book/proc definition of \MR.
\providecommand{\MRhref}[2]{%
  \href{http://www.ams.org/mathscinet-getitem?mr=#1}{#2}
}
\providecommand{\href}[2]{#2}

\end{document}